\newtheorem{thm}{Theorem}[section]
\newtheorem{lem}[thm]{Lemma}
\newtheorem{rem}{Remark}[section]
\theoremstyle{definition}
\numberwithin{equation}{section}
\newcommand{\rr}{{\mathbb R}}
\newcommand{\nn}{\textit{{\textbf{n}}}}
\newcommand{\grad}{\nabla}
\newcommand{\prim}{^\prime}
\newcommand{\ep}{\varepsilon}
\newcommand{\limit}{\lim\limits_}
\newcommand{\bs}{\backslash}
\newcommand{\pa}{\partial}
\newcommand{\lap}{\Delta}
\newcommand{\les}{\lesssim}
\newcommand{\epp}{{\ep^\prime}}
\newcommand{\JJ}{\mathcal{J}_\ep}
\begin{document}                        


\title{On the Two-Dimensional Muskat Problem \\ with Monotone Large Initial Data}

\author[F. Deng]{Fan Deng}
\address[F. Deng]{School of Mathematical Sciences; Fudan University, Shanghai 200433, P.R.China.}
\email{fandeng12@fudan.edu.cn}
\author[Z. Lei]{Zhen Lei}
\address[Z. Lei]{School of Mathematical Sciences; LMNS and Shanghai Key Laboratory for Contemporary Applied Mathematics, Fudan University, Shanghai 200433, P. R. China.}
\email{zlei@fudan.edu.cn}
\author[F.H. Lin]{Fanghua Lin}
\address[F.H. Lin]{Courant Institute of Mathematics, New York University, USA; and Institute of Mathematical Sciences of NYU-ECNU at NYU-Shanghai, PRC.}
\email{linf@cims.nyu.edu}
\date{\today}
\maketitle

\begin{abstract}
We consider the evolution of two incompressible, immiscible fluids with different
densities in porous media, known as the Muskat problem \cite{Muskat}, which in two
dimensions is analogous to the Hele-Shaw cell \cite{Taylor}. We establish, for a 
class of large and monotone initial data, the global existence of weak 
solutions. The proof is based on a local well-posedness result for 
the initial data with certain specific asymptotics at spatial infinity and a  
new maximum principle for the first derivative of the graph function.  
\end{abstract}

\section{Introduction}

The Muskat problem \cite{Bear, Muskat, Taylor}, mathematically analogous to the vertical
Hele-Shaw model in two-dimensional case, describes the motion of two incompressible immiscible fluids in porous media, has attracted a great deal of attentions in recent years, see for examples \cite{breakdown,Cordoba,Maximum,Peter,Peter2,Peter3}. In particular, 
\cite{Peter3} gave a very nice summary of earlier results as well
as  some most updated progress on this problem. The interface between the two fluids is
a "vortex sheet" \cite{Caflisch2,Majda}, since the normal velocity is continuous while
the tangential velocity is discontinuous. In the Muskat problem, the fluids motion are
governed by the Darcy's law. Under the assumption that the gravity is the only body
force acting on the fluid, the momentum equation becomes (in two-dimensional case)
\begin{equation}\label{Darcy}
\begin{split}
u=-\frac{\kappa}{\mu}[\grad p + (0,\rho g)].
\end{split}
\end{equation}
Here $u$ is the velocity field, $\rho$ is the fluid density, $p$ is the pressure, $\mu$ is viscosity, $g$ is the acceleration due to gravity, and $\kappa$ is the permeability.

Darcy's Law may be viewed as a substitute for the momentum equation in the Navier-Stokes system. In fact, it can be considered as a statistical average of the latter, see \cite{Muskat}. Naturally there are three-dimensional
correspondences, but in this paper we only consider the two-dimensional case, and the fluids are incompressible:
\begin{equation}\label{incompressible}
\grad \cdot u=0.
\end{equation}
In addition, the conservation of mass gives
\begin{equation}\label{rho}
\rho_t+ u\cdot \grad \rho=0.
\end{equation}
The resulting system of \eqref{Darcy}, \eqref{incompressible} and \eqref{rho} is closed.

For the Muskat problem, we consider in this paper that
\begin{equation}
\rho(x,t)=
\begin{cases}
\rho^1, \qquad x\in\Omega^1(t),\\
\rho^2, \qquad x\in\Omega^2(t)=\rr^2\bs\Omega^1(t).
\end{cases}
\end{equation}
$\rho^1,~\rho^2$ are two different constants correspond to two homogeneous fluids. Often, the viscosity of the two fluids can be different, where the Atwood Number $A_\mu$ is introduced and it is defined by
\begin{equation}\nonumber
A_\mu=\frac{\mu_2-\mu_1}{\mu_1+\mu_2}.
\end{equation}
For the case $A_\mu\neq0$, we refer to \cite{Ambrose,Siegel}. Sometimes the surface
tension is also considered; it has a regularizing effect \cite{Ester}. We shall
consider the case that $A_\mu=0$ and with no surface tension. Then the system can
be transformed into a contour dynamics system \cite{Cordoba}

\begin{equation}
\begin{split}
3D~\text{case}:\  \frac{df(x,t)}{dt}=&\frac{\rho^2-\rho^1}{4\pi} \\
               &\cdot PV\int_{\rr^2} \frac{(\grad f(x,t)-\grad f(x-y,t))\cdot y}{[|y|^2+(f(x,t)-f(x-y,t))^2]^{3/2}}dy, \\[10pt]
2D~\text{case}:\  \frac{df(x,t)}{dt}=&\frac{\rho^2-\rho^1}{2\pi} \\
               &\cdot PV\int_{\rr}\frac{(\pa_x f(x,t)-\pa_x f(x-\alpha,t))\alpha}{\alpha^2+(f(x,t)-f(x-\alpha,t))^2} d\alpha, \\[10pt]
\end{split}
\end{equation}
where $f(x,t)$ is the function which gives a graphical representation of the
interface. In the three-dimensional case, the interface is a two-dimensional (sheet or) surface in $\rr^3$, while in two dimensions it is a curve in the plane. Rayleigh
\cite{Rayleigh} and Saffman-Taylor \cite{Taylor} gave a condition that must be satisfied
for the linearized model in order to ensure local well-posedness, namely the normal
component of the pressure gradient jump at the interface has to have a distinguished
sign, known as the Rayleigh-Taylor condition
\begin{equation}\label{RT}
-(\grad p^2(x,f(x),t)-\grad p^1(x,f(x),t))\cdot \nn > 0,
\end{equation}
where the normal vector of the curve $(x,f(x))$ is given by 
\begin{equation}\nonumber
\nn=\frac{(-f\prim(x),1)}{\sqrt{1+f\prim(x)^2}}.
\end{equation}
Noting that the two fluids are immiscible, we have, at the free interface, that
\begin{equation}\label{INCOMP}
(u^1-u^2)\cdot \nn=0 \ \ \ i.e.\ \ \  [u]\cdot \nn=0.
\end{equation}
Here $p^1$, $p^2$, $u^1$ and     $u^2$ denote the limits approaching the interface. Using \eqref{RT}, \eqref{INCOMP} together with the Darcy's Law \eqref{Darcy}, we obtain
\begin{equation}\label{RT2}
\rho^1<\rho^2,
\end{equation}
which is the Rayleigh-Taylor condition in our case. \eqref{RT2} can be seen also from the 
linearized system, which will be explained it in Section 2.

Let us recall a few results closely related to this paper. Cordoba-Gancedo
\cite{Cordoba} derived the contour dynamics system and proved its local well-posedness
with $H^k$ , for $k$ suitably large, initial data.
They also showed ill-posedness when Rayleigh-Taylor condition is not satisfied. In \cite{Maximum},
they gave a maximum principle satisfied by the
graph function $f(x,t)$ in various cases. Cordoba-Fefferman-Gancedo \cite{breakdown}
showed that in two dimensions a nonempty open set of initial data in $H^4$ develops finite-time singularity when the R-T condition is violated. Recently, Constantin etc. \cite{Peter}
obtained the global existence for the exact small initial data in two dimensions satisfying $\|f_0\|_1 \leq \frac{1}{5}$. Among other things, this result was improved much
further in their recent
papers \cite{Peter2, Peter3} to $\|f_0\|_1 \leq \frac{1}{3}$. We remark that the norm $\|\cdot\|_1$ used in their paper is defined by $\|f\|_1=\int |\xi||\hat{f}(\xi)|d\xi$,
which is stronger than $C^1$-norm. They also proved the existence of global weak solutions
if the initial data satisfies that $\|\pa_x f_0\|_{L^\infty}$ is smaller than a given
constant in both two dimensions and three dimensions. The compactness comes from their observation that $\|\pa_x f\|_{L^\infty(\rr)}<1$ in two dimensions and $\|\pa_x f\|_{L^\infty(\rr^2)}<\frac{1}{3}$ in three dimensions for all time whenever these are valid initially.

Motivated by \cite{Peter}, in this paper, we consider the two-dimensional contour dynamics equation (derived in \cite{Cordoba}) with large initial data:
\begin{equation}\label{OS}
\begin{split}
&f_t(x,t)=\frac{\rho^2-\rho^1}{2\pi} PV\int_{\rr}\frac{(\pa_x f(x,t)-\pa_x f(x-\alpha,t))\alpha}{\alpha^2+(f(x,t)-f(x-\alpha,t))^2}d\alpha ,\\[5pt]
&f(x,0)=f_0(x), \quad \quad x\in \rr.\\[10pt]
\end{split}
\end{equation}
We assume that $f_0(x)$ is monotonically decreasing (the results of this paper also hold for monotonically increasing initial data with a similar argument) and satisfies
\begin{equation}\label{IDB}
\|f_0\|_{L^\infty} < \infty, \quad \|\pa_x f_0\|_{L^\infty} < \infty, \quad \pa_x f_0(x)\leq 0,
\end{equation}
with the asymptotics
\begin{equation}\label{IDA}
\begin{split}
&f_0(x)\rightarrow a, ~as~x\rightarrow -\infty,\\[5pt]
&f_0(x) \rightarrow b, ~as ~ x \rightarrow \infty.   \qquad(a>b\ are\ two\ constants) 
\end{split}
\end{equation}
\begin{rem}
The restriction of monotone initial data here was inspired by the Nickel-Matano-Henry zero
number (or lap number) theory \cite{Henry,Matano1,Nickel}. This theory is based on the
maximum principle (see e.g. \cite{Angenent2,Matano1}) and was employed to study the
long-time behavior of semilinear parabolic equations (cf. \cite{Angenent1,Angenent3,Matano2}).
It tells us basically that solutions preserve the monotone property if the initial data are
monotone; i.e., the lap number does not increase, at least for the 1D semilinear parabolic
equation. And the equation considered here has a linear (nonlocal) parabolic part (see
Section 2), which suggests a nonlocal nonlinear parabolic nature of the problem. We will
discuss this further in Section 6. The zero number theory also suggest us to explore the
asymptotical behavior of our solution, which seems to be quite interesting although we
would not discuss it in this paper. 

To explore the maximum principle for derivatives of graph functions, we adopted a similar
method as in \cite{Maximum} along with several new observations. One of the technical issue
we need to overcome is the fact that $f(x,t)$  does not vanish at spatial infinity (see
Section 4 and 6).
\end{rem}

With the Rayleigh-Taylor stability condition \eqref{RT2}, we may assume the constant $\frac{\rho^2-\rho^1}{2\pi}=1$ to simplify notation.
Our main result is

\begin{thm}\label{MAINTHM}
For given initial data satisfying \eqref{IDB} and \eqref{IDA}, the contour dynamics system \eqref{OS} admits a global in time weak solution
\begin{equation}\nonumber
f(x,t)\in C([0,T]\times\rr)\cap L^\infty([0,T];W^{1,\infty}(\rr)), \qquad \forall ~T>0.
\end{equation}
Moreover the solution is also monotone for all time in the spatial direction.
\end{thm}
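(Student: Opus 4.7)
The plan is to build the solution by a compactness argument: smoothly approximate the initial datum $f_0$ by monotone decreasing functions $f_0^\ep$ that retain the asymptotics $a$, $b$ at $\mp\infty$, invoke the local well-posedness result alluded to in the abstract for such prescribed-asymptotics data, then extend each approximate solution globally using a maximum principle for $\pa_x f$, and finally pass to the limit $\ep \to 0$.

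First I would normalize the asymptotics by writing $f^\ep(x,t) = \phi(x) + g^\ep(x,t)$, where $\phi$ is a fixed smooth monotone decreasing profile with $\phi(-\infty)=a$, $\phi(+\infty)=b$, and $g^\ep$ is a Schwartz-class (or at least decaying) perturbation of $f_0^\ep - \phi$. Because the contour equation only involves the \emph{differences} $f(x,t)-f(x-\al,t)$ and $\pa_x f(x,t)-\pa_x f(x-\al,t)$, the kernel in \eqref{OS} is well-behaved under this splitting and the prescribed asymptotics are preserved by the flow. The local well-posedness statement (Section 2--3 of the paper) provides a smooth local solution $f^\ep$.

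Next I would prove the two a priori estimates that make the scheme work for all time. Monotonicity, $\pa_x f^\ep(\cdot,t)\le 0$, should be propagated: differentiating \eqref{OS} in $x$ yields a nonlocal linear equation for $\pa_x f^\ep$ whose principal part (at the linearization level discussed in Section 2) is a positive-order, order-one nonlocal operator of parabolic type; one then either argues by the Nickel--Matano--Henry zero number theory mentioned in Remark~2.1, or by a direct maximum-principle argument at the first possible touching point $x_0$ where $\pa_x f^\ep(x_0,t_0)=0$, showing that $\pa_t (\pa_x f^\ep)(x_0,t_0)\le 0$. The same technique, applied at the extremum of $\pa_x f^\ep$, yields $\|\pa_x f^\ep(\cdot,t)\|_{L^\infty}\le \|\pa_x f_0\|_{L^\infty}$; together with $b\le f^\ep(x,t)\le a$ (immediate from the monotonicity in $x$ and the asymptotics), this furnishes the uniform $W^{1,\infty}$-bound needed to continue the solution globally and to control the PV integral in \eqref{OS}. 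An estimate of $f^\ep_t$ in $L^\infty$ follows from plugging these bounds back into the right-hand side of \eqref{OS}, splitting the integral into the singular piece $|\al|\le 1$ (handled by the cancellation $\pa_x f(x)-\pa_x f(x-\al) = O(\al)$ times $\|\pa_x^2 f\|$, or more carefully by the kernel oscillation) and the tail $|\al|\ge 1$ (handled by $|f(x)-f(x-\al)|\le a-b$ so the integrand is $O(\al^{-1})(\pa_x f(x)-\pa_x f(x-\al))$ which is integrable thanks to monotonicity forcing the difference to have a sign).

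With $\{f^\ep\}$ uniformly bounded in $L^\infty_t W^{1,\infty}_x$ and equicontinuous in time on any $[0,T]$, Arzel\`a--Ascoli on any compact subset of $\rr$ (combined with a diagonal subsequence argument, because $\rr$ itself is non-compact) produces a subsequence converging locally uniformly to some $f\in C([0,T]\times\rr)\cap L^\infty([0,T];W^{1,\infty}(\rr))$ which is still monotone decreasing in $x$ for every $t$ and still satisfies the asymptotics. The limit $f$ inherits the monotonicity $\pa_x f\le 0$ in the sense of distributions, and one passes to the limit in the weak formulation of \eqref{OS} tested against a compactly supported test function in $(x,t)$: the two integrations (over $\al$ and over $x$) can be combined and, using the uniform $W^{1,\infty}$ bound and dominated convergence, one can pass $\ep\to 0$ inside the PV integral. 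The arbitrariness of $T$ gives the global weak solution stated in Theorem~\ref{MAINTHM}.

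The main obstacle I anticipate is the maximum principle step, and specifically the fact that $f$ does \emph{not} vanish at spatial infinity, as the authors themselves flag in Remark~2.1. Classical nonlocal maximum-principle arguments (as in \cite{Maximum}) evaluate the nonlocal operator at an extremal point $x_0$ and use a sign argument on $f(x_0)-f(x_0\pm\al)$; here the analogous argument for $\pa_x f$ must contend with the fact that $\pa_x f$ need not decay at $\infty$ uniformly in $\ep$, so the extremum might only be achieved in a limiting sense and one has to take a maximizing sequence, produce suitable translations, and rule out concentration at infinity using the monotonicity $\pa_x f\le 0$ together with the fixed asymptotics. Carrying out this approximation carefully, together with justifying convergence of the singular PV integral in the compactness step, is where the technical work of the proof will concentrate.
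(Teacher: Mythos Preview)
Your overall strategy---regularize the data, obtain local smooth solutions, propagate a maximum principle for $\pa_x f$, and pass to the limit---matches the paper's, but there is a genuine gap at the step ``this furnishes the uniform $W^{1,\infty}$-bound needed to continue the solution globally.'' The norm $\|\pa_x f\|_{L^\infty}$ is scaling-critical for \eqref{OS}, and a uniform bound on it does \emph{not} by itself control the higher norms ($C^{2,\gamma}\cap\dot H^1\cap\dot H^3$) on which the local-existence time in Theorem~\ref{LW} depends. The a~priori estimate of Section~4 closes only as $\frac{d}{dt}\|\pa_x^3 f\|_{L^2}^2 \les C(\|f\|_{L^\infty})\|\pa_x^3 f\|_{L^2}^{k_0}$, which permits finite-time blow-up of the $\dot H^3$ norm. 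Thus mollifying only the initial data and running the true equation \eqref{OS} does not produce global smooth approximants, and your scheme as written cannot reach arbitrary $T$.

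The paper closes this gap by working not with \eqref{OS} but with a further \emph{regularized equation} \eqref{5.1}: it adds viscosity $\ep f_{xx}^\ep$, an artificial dissipation $-\ep C\Lambda^{1-\ep}f^\ep$, and replaces $\Delta_\alpha$ by the subcritical $\Delta_\alpha^\ep$. For each fixed $\ep>0$ this system has global smooth solutions (via \cite{Peter}, adapted to the infinite-energy asymptotics), and Lemma~\ref{5.2} then supplies the $\ep$-uniform $W^{1,\infty}$ bound used in the compactness step. Note also that the artificial term $-\ep C\Lambda^{1-\ep}f^\ep$ is not cosmetic: in the proof of Lemma~\ref{5.2} it is needed to absorb the sign-indefinite pieces $I_{13}$ and $I_{21}$ produced when one differentiates the regularized nonlinearity, so the touching-point calculation you sketch must be carried out at the level of \eqref{5.1}, not \eqref{OS}. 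Finally, the difficulty you correctly anticipate---that the extremum of $\pa_x f$ may sit at infinity---is handled not by translations but by the $\beta$-shift device of Remark~\ref{SMon}: one perturbs to $f^\beta=-\beta x+\widetilde f^\beta$, for which $\pa_x f^\beta\leq -\beta<0$ forces the maximum to be interior, and then sends $\beta\to 0$.
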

We remark that $\dot{W}^{1,\infty}(\rr)$ is the critical space for the graph function in
equation \eqref{OS} (see further discussions in Section 2). Because the
integrodifferential equation \eqref{OS} has a strong singularity, it generates a loss
of derivatives in estimates, the boundness of $\dot{W}^{1,\infty}(\rr)$ of the graph function is
hence a key to establish global solutions for the contour dynamics system
\eqref{OS}.  It should be noted that previous global results \cite{Peter,Peter2} rely on
a smallness of the critical norm. In the present paper, we explored a new maximum principle
which may lead to estimates that exceed those from scaling invariants of the equation
\eqref{OS}. Since this maximum principle is valid for monotone initial data, we need to
establish first the following local well-posedness theorem with given spatial asymptotics
and with infinite energy:

\begin{thm}\label{LW}
For given initial data satisfying \eqref{LIDA} and \eqref{LIDB}, there are a time $T>0$
and a unique solution
\begin{equation}
f(x,t)\in C([0,T],C^{2,\gamma}(\rr)\cap\dot{H}^1(\rr)\cap \dot{H}^3(\rr)) \\
\end{equation}
to the contour dynamics system \eqref{OS}.
\end{thm}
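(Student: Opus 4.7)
The plan is a parabolic regularization combined with uniform-in-$\ep$ energy estimates, adapted to the infinite-energy setting in which $f_0$ has different finite limits at $\pm\infty$. First I would remove the background profile: choose a smooth monotone $\phi\in C^\infty(\rr)$ with $\phi(x)=a$ for $x\leq -1$, $\phi(x)=b$ for $x\geq 1$, and write $f = \phi + h$. Since $f_0-\phi$ decays and lies in $H^3(\rr)\cap C^{2,\gamma}(\rr)$, it is enough to solve for $h$. The key observation is that the right-hand side of \eqref{OS} depends only on $\pa_x f$ and on finite differences $f(x)-f(x-\al)$, both of which make sense for $f\in \phi + \dot H^1\cap L^\infty$. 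Substituting $f = \phi+h$ and expanding $1/(\al^2+(F_\al^{\phi+h})^2)$ with $F_\al^g(x)=g(x)-g(x-\al)$, the equation for $h$ reads schematically
\begin{equation*}
h_t = -c\,\Lambda h + Q(\phi + h) + F(\phi),
\end{equation*}
where $\Lambda = (-\lap)^{1/2}$ is the linearization at a constant profile, $Q$ gathers the quadratic and higher-order nonlocal terms, and $F(\phi)$ is a forcing term which lies in $H^3$ because $\pa_x\phi$ is compactly supported.

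Next I would construct solutions by parabolic regularization. Adding $\ep\pa_x^2 h$ to the right-hand side produces, for each $\ep>0$, a standard parabolic equation that admits a unique local solution $h^\ep\in C([0,T_\ep];H^3)$ by a Duhamel contraction. The substantive step is then to derive $\ep$-independent a priori estimates on a common interval $[0,T]$. I would run an $\dot H^1$ estimate by testing against $-\pa_x^2 h^\ep$ (the linear part yields dissipation $\|\Lambda^{3/2} h^\ep\|_{L^2}^2$, and the nonlinear contribution is controlled by $\|\pa_x h^\ep\|_{L^\infty}$ along the lines of Cordoba-Gancedo), an $\dot H^3$ estimate by testing against $\pa_x^6 h^\ep$ (using integration by parts inside the principal-value integral to absorb the highest derivatives into the dissipation), and a $C^{2,\gamma}$ estimate via Schauder-type theory for the nonlocal parabolic operator $\pa_t + \Lambda$. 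These close into a differential inequality of the form $\frac{d}{dt}Y \les Y^\sigma$ with $\sigma>1$, giving uniform control on a short time interval depending only on the initial data.

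With uniform bounds in hand, Aubin-Lions compactness extracts a subsequential limit $h$ as $\ep\to 0$, and the lower semicontinuity of the relevant norms places $h$ in $C([0,T]; \dot H^1 \cap \dot H^3 \cap C^{2,\gamma})$; continuity in time at the highest level follows from the equation itself, since $h_t$ inherits $L^\infty_t H^1$ control from the estimates. For uniqueness, the difference $w = h_1-h_2$ of two solutions satisfies an equation whose right-hand side, after rewriting as a linear operator applied to $w$ with coefficients involving the $W^{1,\infty}$ norms of $h_i$, gives $\frac{d}{dt}\|w\|_{L^2}^2 \les \|w\|_{L^2}^2$, so $w\equiv 0$ by Gronwall.

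The main obstacle will be closing the a priori estimates in the infinite-energy setting. The principal-value integrals do not commute transparently with the substitution $f=\phi+h$, and one must show that the mixed and forcing terms generated by $\phi$ remain controlled in $H^3$ despite the singular kernel. Moreover, the decomposition into the dissipative piece $-\Lambda h$ plus the remainder $Q$ must be performed so that each summand of $Q$ is amenable to energy estimation via integration by parts inside the principal value, exploiting cancellations that are invisible in the naive expansion. This careful bookkeeping, rather than the abstract scheme, is the technical heart of the argument.
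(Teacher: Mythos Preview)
Your overall architecture---regularize, derive uniform-in-$\ep$ estimates, pass to the limit by compactness---is exactly what the paper does, and your $\dot H^1$/$\dot H^3$ energy estimates and the final $\frac{d}{dt}Y\les Y^\sigma$ closure match Section~4. But the implementation differs from the paper in three places, and one of them is a genuine gap.

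\textbf{The regularization.} You add only the viscosity $\ep\,\pa_x^2 h$. The paper explicitly warns (introduction and Section~3) that this alone ``can not overcome the possible derivative losses'' of the nonlocal term, and it therefore \emph{also} regularizes the kernel, replacing $\Delta_\alpha f$ by $\Delta_\alpha^{\ep'}f=(f(x)-f(x-\alpha))\,|\alpha|^{\ep'}/\alpha$. The point is that the unregularized nonlinear piece $Q$ does not map $H^3$ to $H^2$ with a usable bound: the worst contribution to $\pa_x^2 Q(f)$ contains $\int_{|\alpha|<1}\Delta_\alpha(\pa_x^3 f)\,(\cdots)\,d\alpha$, and one only has $\|\Delta_\alpha(\pa_x^3 f)\|_{L^2}\les |\alpha|^{-1}\|\pa_x^3 f\|_{L^2}$, which is not integrable in $\alpha$. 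The kernel regularization turns $|\alpha|^{-1}$ into $|\alpha|^{-1+\ep'}$ and restores integrability, so that the Duhamel contraction in $C^{2,\gamma}$ (Theorem~3.1 of the paper) actually closes. Your sentence ``a standard parabolic equation that admits a unique local solution by a Duhamel contraction'' skips over precisely this issue.

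\textbf{The background subtraction.} The paper does \emph{not} write $f=\phi+h$; it works directly with $f$ in $L^\infty\cap\dot H^1\cap\dot H^3$, using a maximum principle for $\|f\|_{L^\infty}$ (beginning of Section~4) in place of an $L^2$ estimate. Your subtraction is a reasonable alternative, but note that $h_0=f_0-\phi$ need not lie in $L^2$ (the hypotheses give only $h_0\to0$ at infinity and $h_0\in\dot H^1\cap\dot H^3$), so your claim $h_0\in H^3$ is not justified as stated; you would end up in the same homogeneous framework the paper uses, only now with extra forcing and cross terms from $\phi$ to track.

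\textbf{The $C^{2,\gamma}$ control.} No Schauder theory for $\pa_t+\Lambda$ is needed: in one dimension the paper simply recovers $C^{2,\gamma}$ (for $\gamma<\tfrac12$) by Sobolev embedding from the $L^\infty\cap\dot H^1\cap\dot H^3$ bounds (end of Section~4). This is both easier and avoids developing nonlocal parabolic regularity from scratch.
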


Theorem \ref{LW} does not seem to follow from the results of \cite{Cordoba}. The local
well-posedness results in \cite{Cordoba} require initial data to be at least in energy
spaces. If we cut off the initial data such as in \cite{Peter}, the regularized initial
data would then not maintain the monotone property, which is however crucial for our
result.  To obtain local solutions, we use an approximate approach as in \cite{Peter}. To start 
with, we add an artificial viscous term $"\epp f^\epp_{xx}"$. One
soon realizes that such a term can not overcome the possible derivative losses of the
system \eqref{OS}. We then regularize the nonlinear part via the method inspired by
\cite{Peter}. In the process of making
\textit{a priori} estimates for local solutions (in Section 4), we do not have the natural $L^2$ bound
as in \cite{Cordoba} (We use the homogeneous Sobolev spaces partly because of it), which is crucial in \cite{Cordoba}. Instead, we use the $L^\infty$ norm coming from the maximum principle for $f(x,t)$.

Moreover, we show that the local solutions enjoy the same spatial asymptotics as the
initial data. The latter is possible because the integrodifferential equation
\eqref{OS} have a special structure: the terms of \eqref{OS} involve only the
derivatives and the differences of the unknown function, in particular, constant functions
are steady states.
We therefore add some restrictions to the standard H\"{o}lder Spaces such that the solution
space becomes $C^{j,\gamma}(\rr)\cap \dot{H}^k(\rr)(k\geq 1)$. (The definition of the homogeneous Sobolev seminorm can be found below.)

The rest of this paper are organized as follows. In section 2 we recall some related
properties of the system \eqref{OS}. Then we propose an approximate scheme for \eqref{OS}
and prove its local well-posedness in Section 3. In Section 4 and 5, we establish local
well-posedness for the contour dynamics system \eqref{OS}. And finally, we show the
existence of global weak solutions in Section 6.\\

\textit{Notations} \quad We use $\Delta_\alpha f(x,t)$ to denote the standard difference of $f$: \\
\begin{equation*}
 \Delta_\alpha f(x,t)=\frac{f(x)-f(x-\alpha)}{\alpha},\qquad \Delta_{x-\alpha}f(x,t)=\frac{f(x)-f(\alpha)}{x-\alpha}.\\[5pt]
\end{equation*}
 We use $|\cdot|_{\dot{C}^\gamma}$ to denote the homogenous H\"{o}lder seminorm with the index $0<\gamma<1$:
\begin{equation*}
|f(\cdot)|_{\dot{C}^\gamma}=\sup_{x\neq y}\frac{|f(x)-f(y)|}{|x-y|^\gamma}.
\end{equation*}

$\|\cdot\|_{C^\gamma}=\|\cdot\|_{L^\infty}+|\cdot|_{\dot{C}^\gamma}$ represents the inhomogeneous H\"{o}lder norm. We use $\|\cdot\|_{\dot{H}^k}$ to denote the standard homogeneous Sobolev seminorm: 
$\|f\|_{\dot{H}^k}=\|\sum_\beta D^\beta f\|_{L^2}$, where $\beta 's$ is the multi-index with $|\beta|=k$. We use $A \les B$ to denote that $A\leq C_0 B$ and $A\simeq B$ to denote that $C_0^{-1}B \leq A \leq C_0 B$ for some constant $C_0>1$ and two positive quantities $A$ and $B$.

\section{Preliminaries}

As in \cite{Cordoba}, the contour dynamics equation(CDE) \eqref{OS} can be linearized
around the flat solution. Using the Hilbert Transform \cite{Stein}, we can write
\begin{equation*}
f_t(x,t)=-\frac{\rho^2-\rho^1}{2}\frac{1}{\pi}PV\int_{\rr}\frac{f_x(x-\alpha)}{\alpha}dx=-\frac{\rho^2-\rho^1}{2}Hf_{x}.
\end{equation*}
Applying the Fourier transform, we obtain
\begin{equation*}
\hat{f_t}(\xi)=-\frac{\rho^2-\rho^1}{2}|\xi| \hat{f}(\xi).
\end{equation*}
Therefore, the linearized system reads
\begin{equation}
f_t=-\frac{\rho^2-\rho^1}{2}\Lambda f ,\quad\quad\quad x\in \rr,
\end{equation}
where $\Lambda=\sqrt{-\lap}$. In the three-dimensional case, the linearized system reads
\begin{equation}
f_t=-\frac{\rho^2-\rho^1}{2}(R_1 \pa_{x_1}f+R_2 \pa_{x_2}f)=-\frac{\rho^2-\rho^1}{2}\Lambda f , \quad x\in \rr^2,
\end{equation}
where $R$ denotes the standard Riesz operator \cite{Stein}. Clearly, from the
linearized equations, the nonlinear system \eqref{OS} is stable and well-posed if $\rho^1 < \rho^2$ and unstable if $\rho^1 > \rho^2$, which can also be deduced from
Rayleigh-Taylor Condition \cite{Rayleigh} \cite{Taylor}. Moreover, it is proved in
\cite{Cordoba} that even for arbitrary small initial data in $H^s$ some solutions
leave the space $H^s$ right away in the unstable situation. 

System \eqref{OS} is invariant under the scaling
\begin{equation}\nonumber
\begin{split}
f(x,t) \rightarrow f^\lambda(x,t)=\lambda^{-1} f(\lambda x, \lambda t).
\end{split}
\end{equation}
One can verify that $\sup_{t}\|\pa_x f(\cdot,t)\|_{L^\infty}$ and $\sup_{t} \| \xi
\hat{f}(\xi,t)\|_{L^1} $ are invariant under the above scaling transformation.

System \eqref{OS} enjoys the $L^2$ energy law derived in \cite{Peter}
\begin{equation}\label{EI1}
\begin{split}
\|f\|&_{L^2}^2(t) \\[5pt]
&+ \frac{\rho^2-\rho^1}{2\pi}\int_0^t \int_\rr \int_\rr \ln(1+(\frac{f(x,s)-f(\alpha,s)}{x-\alpha})^2) dxd\alpha ds \\[5pt]
&\ \ \ \ \ =\|f_0\|_{L^2}^2.
\end{split}
\end{equation}
The same energy identity also holds for the three-dimensional case \cite{Peter2}. Unfortunately, in \eqref{EI1},  the second term on the left-hand side does not provide a gain of a half derivative for $\rho^2>\rho^1$ as it is in the linear case. The energy identity in the latter case is given by
\begin{equation}\nonumber
\begin{split}
\|f\|_{L^2}^2(t)+ \frac{\rho^2-\rho^1}{2\pi}\int_0^t \int_\rr \int_\rr (\frac{f(x,s)-f(\alpha,x)}{x-\alpha})^2 dxd\alpha ds =\|f_0\|_{L^2}^2.
\end{split}
\end{equation}
However, when the first derivative of $f(x,t)$ is small under the norm $L^\infty$,
the Taylor expansion of $\ln(1+u^2)$ would lead to a gain in estimates on
derivatives, and consequently one obtains a compactness. This is one of
the keys in \cite{Peter} \cite{Peter2} where they established global solutions evolved from
an exact class of small initial data. In our case, the compactness comes from a new
maximum principle in Section 6.

\section{Approximate System for Constructing Local Solutions}

In this section, we propose the approximate scheme for the contour dynamics system
\eqref{OS} and establish local well-posedness for this approximate system. This will be
used to construct local solutions to \eqref{OS} in Section 4 and 5. We will focus on
initial data (see Figure 3.1) which has also spatial limits as $x$ goes to infinity
\begin{equation}\label{LIDA}
\begin{split}
&\qquad\quad \bar{f}_0(x)\rightarrow a \quad as\ x\rightarrow -\infty,\\[5pt]
&\qquad\quad \bar{f}_0(x) \rightarrow b \quad as\ x \rightarrow \infty,   \qquad(a>b\ are\ two\ constants) 
\end{split}
\end{equation}
along with certain smoothness,
\begin{equation}\label{LIDB}
\bar{f}_0(x)\in C^{2,\gamma}(\rr)\cap \dot{H}^1(\rr)\cap \dot{H}^3(\rr).\\
\end{equation}

Note that the monotonicity is not required here. The homogeneous Sobolev spaces $\dot{H}^1(\rr)$ and $\dot{H}^3(\rr)$ here are explained and defined in Section 1. 

We consider the following approximate system:
\begin{equation}\label{LAS}
\begin{cases}
& \pa_t f^\epp(x,t)-\epp f^\epp_{xx}= \widetilde{F}(f^\epp),\\
& f^\epp(x,0)= \bar{f}_0(x),
\end{cases}
\end{equation}
where
\begin{equation}\label{FFF}
\begin{split}
 \widetilde{F}(f^\epp)&=: \pa_x PV\int_\rr \arctan(\Delta_\alpha^\epp f^\epp(x,t)) d\alpha \\[5pt]
                      &= PV\int_\rr \pa_x \Delta_\alpha^\epp f^\epp d\alpha  -PV\int_{\rr}\frac{\pa_x \Delta_\alpha^\epp f^\epp (\Delta_\alpha^\epp f^\epp)^2 }{1+(\Delta_\alpha^\epp f^\epp)^2} d\alpha, \\[15pt]
\end{split}
\end{equation}

\begin{center}\nonumber \label{fig1}
\begin{tikzpicture}[scale=0.5]
\draw [thick](-4.5,3.03) to[out=0,in=179.6] (-3, 3)to [out=0.4,in=135](-2,2.5)to[out=-45,in=180](-1,2) to [out=0, in=-110](0,4) to [out=70, in=180](0.6,4.5)
to [out=0,in=130](1.3,4)to [out=-50, in=170](4,0) to [out=-10, in=180](6,0.8) to [out=0,in=180.5](6.5,0.83) to[out=0.5, in=181] (7.5,0.85);
\node [below] at (1.5, -1){$\mathit{FIGURE~3.1.}$ Initial Data ~$\bar{f}_0(x)$};\label{fig1}
\end{tikzpicture}
\end{center}
and $\Delta_\alpha^{\epp}$, $\Delta_{x-\alpha}^{\epp}$ denote the regularized difference operators (note that \eqref{difference} was from \cite{Peter})
\begin{equation}\label{difference}
\begin{split}
\Delta_\alpha^\epp f^\epp= \frac{f^\epp(x)-f^\epp(x-\alpha)}{\frac{\alpha}{|\alpha|^\epp}}, \quad \Delta_{x-\alpha}^\epp f^\epp = \frac{f^\epp(x)-f^\epp(\alpha)}{\frac{x-\alpha}{|x-\alpha|^\epp}}.
\end{split}
\end{equation}

For the rest of this section, we write $f(x,t)\ =\ f^\epp(x,t)$ to simplify notation. We claim that we can write the first term in \eqref{FFF} as $-(1-\epp)\Lambda^{1-\epp}~f(x)$. In fact, as in \cite{QG}, when $\|f(\cdot, t)\|_{L^\infty}$ and $\|\pa_x f(\cdot, t)\|_{L^\infty}$ are bounded, we can compute that
\begin{equation}\label{LinearPart}
\begin{split}
PV\int_\rr& \pa_x \Delta_\alpha^\epp f d\alpha \\[5pt]
= & -PV\int_\rr \frac{\pa_\alpha (f(x)-f(x-\alpha))}{\frac{\alpha}{|\alpha|^\epp}} d\alpha \\[5pt]
                                              = & -\limit{\delta \rightarrow 0} \int_{\delta \leq |\alpha| \leq \frac{1}{\delta}}
\frac{\pa_\alpha (f(x)-f(x-\alpha))}{\frac{\alpha}{|\alpha|^\epp}} d\alpha \\[5pt]
                                              = & -\limit{\delta \rightarrow 0} |\delta|^\epp \frac{f(x)-f(x-\alpha)}{\alpha}\Big{|}_{|\alpha|=\delta}-\limit{\delta \rightarrow 0}\frac{f(x)-f(x-\alpha)}{\frac{\alpha}{|\alpha|^\epp}}\Big{|}_{|\alpha|=\frac{1}{\delta}} \\[5pt]
                                                &+(\epp-1)\limit{\delta \rightarrow 0}\int_{\delta \leq |\alpha| \leq \frac{1}{\delta}} \frac{f(x)-f(x-\alpha)}{|\alpha|^{2-\epp}}d\alpha  \\[5pt]
                                              = & -(1-\epp) PV\int_\rr \frac{f(x)-f(x-\alpha)}{|\alpha|^{2-\epp}}d\alpha = -(1-\epp) \Lambda^{1-\epp}f(x).
\end{split}
\end{equation}
Define the solution map in terms of the heat kernel for the system \eqref{LAS} as follows:
\begin{equation}
Sf=e^{\epp t \Delta}\bar{f}_0+\int_0^t e^{\epp(t-s)\Delta}F(f(s))ds ,
\end{equation}
We now set up the problem so that we can use the standard contraction mapping theory. Let $B=L^\infty([0,T]; C^{2,\gamma}(\rr))$ be the Banach space. Let $M>0$ be a constant. Let 
\begin{equation}
E=\{f\in B ~~| ~~\sup_{0\leq t\leq T}\|f(\cdot,t)\|_{C^{2,\gamma}} \leq 2M\}
\end{equation}
be equipped with the norm
\begin{equation}
\|f\|_E = \sup_{0\leq t\leq T}\|f(\cdot,t)\|_{C^{2,\gamma}}.
\end{equation}
It is easy to see that $E$ is a closed convex subset of the Banach space $B$. Then the local well-posedness of the approximation system in \eqref{LAS} is stated in the following theorem:
\begin{thm}\label{LS}
For any fixed $\epp>0$ and given $\bar{f}_0$ satisfying $\|\bar{f}_0\|_E \leq M$, system \eqref{LAS} admits a unique local solution $f^\epp(x,t)$ in $B$. Moreover, if additionally $\|\bar{f}_0(\cdot)\|_{\dot{H}^k}\leq M$ for some $k \geq 1$, then the solution is also in $C([0,T],\dot{H}^k(\rr))$.
\end{thm}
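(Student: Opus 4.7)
The plan is to apply the Banach contraction mapping theorem to $S$ on the closed convex set $E$, exploiting the fact that, for $\epp > 0$ fixed, the viscous semigroup $e^{\epp t \Delta}$ smooths by two derivatives, whereas the right-hand side $\widetilde F$ loses strictly less than one derivative. Indeed, its linear part $-(1-\epp)\Lambda^{1-\epp}$ is of order $1-\epp<1$, and the nonlinear remainder
\begin{equation*}
\mathcal{N}(f) = -PV\int_{\rr} \frac{\pa_x \Delta_\alpha^\epp f \,(\Delta_\alpha^\epp f)^2}{1+(\Delta_\alpha^\epp f)^2}\, d\alpha
\end{equation*}
involves only one derivative of $f$ inside a kernel that is integrable in $\alpha$. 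This gap between parabolic gain and hyperbolic loss is what makes the Duhamel iteration close for small $T$.

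The first step is to establish $\|\widetilde F(f)\|_{C^{1,\gamma}} \leq C(\epp, M)$ and its Lipschitz counterpart $\|\widetilde F(f)-\widetilde F(g)\|_{C^{1,\gamma}}\leq C(\epp,M)\|f-g\|_{C^{2,\gamma}}$ for $f,g \in E$. For $\mathcal N$ this is done by splitting the $\alpha$-integral: near the origin, $\Delta_\alpha^\epp f = O(|\alpha|^\epp \|\pa_x f\|_{L^\infty})$, producing a factor $|\alpha|^{2\epp}$ that controls the integrand; at infinity, $\Delta_\alpha^\epp f = O(|\alpha|^{\epp-1}\|f\|_{L^\infty})$ and the rational factor $(\Delta_\alpha^\epp f)^2/(1+(\Delta_\alpha^\epp f)^2)$ is bounded by $1$, which gives integrability since $\epp<1$. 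Combined with the heat-kernel regularization $\|e^{\epp t\Delta}g\|_{C^{2,\gamma}} \lesssim (\epp t)^{-1/2}\|g\|_{C^{1,\gamma}}$ and the maximum-principle bound $\|e^{\epp t\Delta}\bar f_0\|_{C^{2,\gamma}} \leq \|\bar f_0\|_{C^{2,\gamma}}$, this yields
\begin{equation*}
\|Sf\|_E \leq M + C(\epp,M)\int_0^T (\epp(T-s))^{-1/2}\, ds \leq M + C(\epp,M)\,T^{1/2},
\end{equation*}
together with $\|Sf-Sg\|_E \leq C(\epp,M)\,T^{1/2}\,\|f-g\|_E$. Choosing $T$ small enough depending on $\epp$ and $M$ makes $S$ map $E$ into itself and be a $\tfrac{1}{2}$-contraction; Banach's fixed point theorem then delivers the unique $f^\epp \in B$, with continuity in time directly from the Duhamel formula.

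For the higher-regularity statement I would run an energy estimate. Applying $D^k$ to \eqref{LAS}, testing against $D^k f^\epp$, and integrating gives
\begin{equation*}
\frac{1}{2}\frac{d}{dt}\|f^\epp\|_{\dot H^k}^2 + \epp \|f^\epp\|_{\dot H^{k+1}}^2 + (1-\epp)\|\Lambda^{(1-\epp)/2} D^k f^\epp\|_{L^2}^2 = \int D^k f^\epp \cdot D^k \mathcal{N}(f^\epp)\, dx.
\end{equation*}
Since $D^k$ commutes with $\Delta_\alpha^\epp$, a Leibniz-type expansion of $D^k\mathcal N(f^\epp)$ produces terms carrying at most one top-order factor $\Delta_\alpha^\epp D^{k+1}f^\epp$, all other factors being controlled by the already-established $C^{2,\gamma}$-norm. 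Using the Hardy-type estimate $\|\Delta_\alpha^\epp g\|_{L^2}\lesssim \min(|\alpha|^{\epp-1}\|g\|_{L^2},\,|\alpha|^\epp\|g'\|_{L^2})$ to make the $\alpha$-integration converge, and Young's inequality to absorb the top-order piece into $\tfrac{\epp}{2}\|f^\epp\|_{\dot H^{k+1}}^2$, the right-hand side is dominated by $C(\|f^\epp\|_{C^{2,\gamma}})\|f^\epp\|_{\dot H^k}^2$; Grönwall then propagates $\|f^\epp(t)\|_{\dot H^k}$ on $[0,T]$, and continuity in $\dot H^k$ follows from the equation, which places $\pa_t f^\epp$ in $L^2([0,T];\dot H^{k-1})$. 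The main technical obstacle throughout is the derivative-and-differencing bookkeeping inside $\mathcal N$: verifying that high-order derivatives pass through the principal-value integral without creating non-integrable singularities in $\alpha$, which is precisely where the regularization $\alpha/|\alpha|^\epp$ in the definition of $\Delta_\alpha^\epp$ plays its essential role.
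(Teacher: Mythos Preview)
Your contraction-mapping argument for the $C^{2,\gamma}$ part is exactly the paper's approach: both estimate $\widetilde F(f)$ in $C^{1,\gamma}$, use one derivative of the heat kernel to gain the $\sqrt{T}$ factor, and close for $T$ small. The only real difference is in the $\dot H^k$ propagation. The paper does \emph{not} run an energy estimate here; instead it enlarges the ball to
\[
E=\bigl\{f:\ \sup_{t}\|f(\cdot,t)\|_{C^{2,\gamma}}\le 2M,\ \sup_{t}\|f(\cdot,t)\|_{\dot H^k}\le 2M\bigr\}
\]
and re-runs the fixed point, putting $k-1$ derivatives on $F$ and one on $K^{\epp}$, and controlling the linear piece $\|\Lambda^{1-\epp}\partial_x^{k-1}f\|_{L^2}$ by a Littlewood--Paley split that trades low frequencies for $\|f\|_{L^\infty}$ (needed since $f\notin L^2$). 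Your energy/Gr\"onwall route is a legitimate alternative and in fact anticipates the uniform-in-$\epp$ estimates of Section~4, but it carries a small circularity you should address: writing the identity and absorbing the top term into $\tfrac{\epp}{2}\|f^{\epp}\|_{\dot H^{k+1}}^2$ presupposes $f^{\epp}\in L^2_t\dot H^{k+1}$, which is not yet known from the $C^{2,\gamma}$ fixed point alone. This is easily repaired (parabolic smoothing from $\epp f_{xx}$ for $t>0$, or a further mollification), but you should say so. The paper's fixed-point version sidesteps this issue entirely at the cost of being tied to the Duhamel formulation; your version is more flexible and closer in spirit to what is needed later for the $\epp$-independent bounds.
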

\begin{rem}\label{2.3}
At the end of this section, we shall derive from Theorem \ref{LS} that the local
solutions have the same spatial asymptotics at infinity as
the initial data. That is to say, as $x$ goes to infinity, $f(x,t)$ goes to the same constants as $\bar{f}_0(x)$ for
any~$t\in[0,T]$. The latter is needed in the proof of Theorem \ref{MAINTHM}.
\end{rem}
\begin{proof}
\textit{Step 1.  S maps E to E.} \ \  We need to prove that $\|Sf\|_E \leq 2M$.
We proceed with first the following:
\begin{equation}\label{3.88}
\begin{split}
\sup_{0\leq t\leq T}\|Sf\|_{L^{\infty}}
       \leq & \|\bar{f}_0\|_{L^{\infty}} + \sup_{t} \int_0^t\|e^{\epp(t-s)\Delta}F(f(s))\|_{L^{\infty}}ds   \\[5pt]
       \leq & M + \sup_{t} \int_0^t\|K^\epp(\cdot,t-s)\ast F(f(\cdot,s))\|_{L^{\infty}}ds   \\[5pt]
       \leq & M + CT\sup_{t}\|F(f)\|_{L^{\infty}},
\end{split}
\end{equation}
where $K^\epp(x,t)=\frac{1}{\sqrt{4\pi \epp t}}e^{-\frac{x^2}{4\epp t}}$ is the standard 1D heat kernel. $\|F(f)\|_{L^\infty}$ is estimated by splitting the integral as follows:
\begin{equation}\label{3.99}
\begin{split}
\|F(f)\|_{L^\infty}
       \leq & \|\Lambda^{1-\epp}f\|_{L^{\infty}} + \| \pa_x^2 f\|_{L^{\infty}}\int_{|\alpha|<1} |\alpha|^{\epp}d\alpha \\[5pt]
            & +\|f\|^2_{L^{\infty}}  \|\pa_x f\|_{L^{\infty}}\int_{|\alpha|>1}\frac{1}{|\alpha|^{3-3\epp}}d\alpha  \\[5pt]
       \leq & C(\|f\|_{C^{2,\gamma}}+\|f\|_{C^{2,\gamma}}+\|f\|^3_{C^{2,\gamma}}) \\[5pt]
       \leq & CM^3.
\end{split}
\end{equation}
Then by \eqref{3.88} and \eqref{3.99} we obtain
\begin{equation}
\begin{split}
\sup_{0\leq t\leq T}\|Sf\|_{L^{\infty}}
       \leq  M + CTM^3  \leq  2M .
\end{split}
\end{equation}
In the last inequality above we have chosen $T$ small enough so that $CTM^2$ is smaller than 1. 

Next, we estimate the H\"{o}lder seminorm.
\begin{equation}
\begin{split}
\sup_{0\leq t\leq T}|Sf|_{\dot{C}^\gamma}
       &\leq |\bar{f}_0|_{\dot{C}^\gamma} + \sup_{t} \int_0^t|K^\epp(\cdot,t-s)\ast F(f(\cdot,s))|_{\dot{C}^\gamma}ds   \\[5pt]
       &\leq M + \sup_{t}\int_0^t\|K^\epp(\cdot,t-s)\|_{L^1} |F(f(\cdot,s))|_{\dot{C}^\gamma}ds   \\[5pt]
       &\leq M + CT\sup_{t}|F(f(\cdot,t))|_{\dot{C}^\gamma}  \\[5pt]
       &\leq M + CT(\sup_{t}|\Lambda^{1-\epp}f|_{\dot{C}^\gamma}+|\int_{|\alpha|<1}+\int_{|\alpha|>1}|_{\dot{C}^\gamma})  \\[5pt]
       &\leq M + CT M^{3} \\[5pt]
       &\leq 2M    \quad \quad \quad (Choosing~~~ T~~~ small~~~ enough).
\end{split}
\end{equation}
The homogeneous H\"{o}lder seminorm of the high derivatives can be estimated as follows:

\begin{equation}
\begin{split}
\sup_{0\leq t\leq T}|\pa_x^2 Sf|_{\dot{C}^\gamma}
       &\leq |\pa_x^2 \bar{f}_0|_{\dot{C}^\gamma} + \sup_{t} \int_0^t |\pa_x K^\epp(\cdot,t-s)\ast \pa_x F(f(\cdot,s))|_{\dot{C}^\gamma}ds   \\[5pt]
       &\leq M + C\int_0^t \frac{1}{\sqrt{\epp s}}ds \| \pa_x K^\epp\|_{L^1} \sup_{t}|\pa_x F(f)|_{\dot{C}^\gamma}  \\[5pt]
       &\leq M + C(\epp)\sqrt{t} \sup_{T}|\pa_x F(f(t))|_{\dot{C}^\gamma}.
\end{split}
\end{equation}
After splitting $\sup_{t}|\pa_x F(f(t))|_{\dot{C}^\gamma(\rr)}$ (using its definition)
into various terms, the most difficult term to control is the following $S_1$ in \eqref{SingularTerm}, in which all differentiations are applied to one of
the $f$'s. For the convenience, we shall present only the estimate for $S_1$:
\begin{equation}\label{SingularTerm}
\begin{split}
S_1 =& \sup_{t}|\Lambda^{1-\epp}\pa_x f|_{\dot{C}^{\gamma}} + \sup_{t}|PV\int_{\rr}\frac{\pa_x^2 \Delta_\alpha^\epp f(x) (\Delta_\alpha^\epp f(x))^2}{1+(\Delta_\alpha^\epp f(x))^2}d\alpha|_{\dot{C}^\gamma(\rr)} \\[5pt]
    \leq &CM + \sup_{x\neq y}\frac{1}{|x-y|^{\gamma}}|\int_{\rr}\frac{\pa_x^2 \Delta_\alpha^\epp f(x) (\Delta_\alpha^\epp f(x))^2}{1+(\Delta_\alpha^\epp f(x))^2}d\alpha \\[5pt]
    &-\int_{\rr}\frac{\pa_x^2 \Delta_\alpha^\epp f(y) (\Delta_\alpha^\epp f(y))^2}{1+(\Delta_\alpha^\epp f(y))^2}d\alpha| \\[5pt]
    \leq &CM + Y_1+ Y_2 + Y_3 ,
\end{split}
\end{equation}
where we have also used the following estimate:
\begin{equation}\nonumber
\begin{split}
 &|\Lambda^{1-\epp}\pa_x f|_{\dot{C}^{\gamma}(\rr)}\\[5pt]
 =& \sup_{x\neq y}\frac{1}{|x-y|^{\gamma}} |PV\int_\rr \frac{f_x(x)-f_x(x-\alpha)}{|\alpha|^{2-\epp}}d\alpha \\[5pt]
              & - PV\int_\rr \frac{f_y(y)-f_y(y-\alpha)}{|\alpha|^{2-\epp}}d\alpha| \\
              \leq & (\sup_{x\neq y}\frac{|f_x(x)-f_y(y)|}{|x-y|^{\gamma}}+\sup_{x\neq y}\frac{|f_x(x-\alpha)-f_y(y-\alpha)|}{|x-y|^{\gamma}})\int_{|\alpha|>1} \frac{1}{|\alpha|^{2-\epp}}d\alpha \\[5pt]
              &+ \sup_{x\neq y}\frac{|\int_0^1 \Big{(} f_{xx}(x-\alpha+\theta\alpha)-f_{yy}(y-\alpha+\theta\alpha) \Big{)} d\theta|}{|x-y|^{\gamma}}
              \int_{|\alpha|<1}\frac{1}{|\alpha|^{1-\epp}}d\alpha  \\[5pt]
              \leq & C(\epp)(|\pa_x f|_{\dot{C}^\gamma}+ |\pa_x^2 f|_{\dot{C}^\gamma}) \\[5pt]
              \leq & C(\epp)M.
\end{split}
\end{equation}
And $Y_1,Y_2,Y_3$ in \eqref{SingularTerm} are given by
\begin{equation}
\begin{split}
Y_1 &=\sup_{x\neq y}\frac{1}{|x-y|^{\gamma}}|\int_{\rr}\frac{\pa_x^2 \Delta_\alpha^\epp (f(x)-f(y)) (\Delta_\alpha^\epp f(x))^2}{1+(\Delta_\alpha^\epp f(x))^2}d\alpha|, \\[5pt]
Y_2 &=\sup_{x\neq y}\frac{1}{|x-y|^{\gamma}}|\int_{\rr}\frac{\pa_x^2 \Delta_\alpha^\epp f(y) \Delta_\alpha^\epp (f(x)-f(y))\Delta_\alpha^\epp (f(x)+f(y))}{1+(\Delta_\alpha^\epp f(x))^2}d\alpha|, \\[5pt]
Y_3 &=\sup_{x\neq y}\frac{1}{|x-y|^{\gamma}}|\int_{\rr}\frac{\pa_x^2 \Delta_\alpha^\epp f(y)(\Delta_\alpha^\epp f(y))^2\Delta_\alpha^\epp (f(x)-f(y))\Delta_\alpha^\epp (f(x)+f(y))}{(1+(\Delta_\alpha^\epp f(x))^2)(1+(\Delta_\alpha^\epp f(y))^2)}   d\alpha|. 
\end{split}
\end{equation}
Thanks to the fact that nonlinear terms have already been smoothed out in \eqref{FFF}, we
can estimate $Y_1$ as follows:
\begin{equation}\nonumber
\begin{split}
Y_1 \leq & \sup_{x\neq y}\frac{1}{|x-y|^{\gamma}}\int_{|\alpha|>1}\frac{|\pa_x^2f(x)-\pa_x^2 f(y)-(\pa_x^2f(x-\alpha)-\pa_x^2f(y-\alpha)) ||\Delta_\alpha^\epp f(x)|^2}{\frac{\alpha}{|\alpha|^\epp}} \\[5pt]
     &+   \sup_{x\neq y}\frac{1}{|x-y|^{\gamma}}\int_{|\alpha|<1}\frac{|\pa_x^2f(x)-\pa_x^2 f(y)-(\pa_x^2f(x-\alpha)-\pa_x^2f(y-\alpha) |}{\frac{\alpha}{|\alpha|^\epp}}    \\[5pt]
     \leq & C \|f\|_{L^\infty}^2 |\pa_x^2 f|_{\dot{C}^\gamma}\int_{|\alpha|>1} \frac{1}{|\alpha|^{3-3\epp}}d\alpha
    + C|\pa_x^2 f|_{\dot{C}^{\gamma}}\int_{|\alpha|<1} \frac{1}{|\alpha|^{1-\epp}}d\alpha   \\[5pt]
    \leq  & C(\epp)\|f\|_{C^{2,\gamma}}^3.
\end{split}
\end{equation}
In a similar way, $Y_2$ and $Y_3$ can be bounded by
\begin{equation}\nonumber
Y_2+Y_3 \leq C(\epp) \|f\|_{C^{2,\gamma}}^{3}.
\end{equation}
Therefore we conclude that
\begin{equation}\nonumber
\begin{split}
\sup_{0\leq t\leq T}|\pa_x^2 Sf|_{\dot{C}^\gamma}  \leq M + \sqrt{T} C(\epp) M^{3}  \leq 2M.
\end{split}
\end{equation}
Collecting all the estimates, we have
\begin{equation}\nonumber
\|Sf\|_E \leq \|f\|_E,
\end{equation}
which means that $S$ maps $E$ to $E$.

\textit{Step 2.  S is Contractive.} \quad We need to show that, there exists a $0<\lambda<1$ such that for any $f,g\in E$, there holds
\begin{equation}\label{2.2}
\|Sf-Sg\|_E \leq \lambda \|f-g\|_E.
\end{equation}
Similar to arguments in \textit{Step 1}, the proof is also straightforward. For instance,
for the term with highest derivative $|\pa_x^2 (Sf-Sg)|_{\dot{C}^\gamma(\rr)}$, we
can compute that
\begin{equation}\nonumber
\begin{split}
|\pa_x^2(Sf-Sg)|_{\dot{C}^{\gamma}(\rr)} &= |\pa_x^2 \int_0^t e^{\epp(t-s)\Delta}(F(f)-F(g))(s)ds |_{\dot{C}^{\gamma}(\rr)}  \\[5pt]
                                   &\leq |\int_0^t\int_{\rr}\pa_x K^\epp(x-y,t-s)\pa_x(F(f)-F(g))(y,s) dyds|_{\dot{C}^{\gamma}(\rr)}  \\[5pt]
                                   &\leq \sqrt{T}C(\epp) \sup_{t}|\pa_x(F(f)-F(g))(\cdot,t) |_{\dot{C}^{\gamma}(\rr)}. 
\end{split}
\end{equation}
The last term can be estimated as that for $Y_1$, and we omit these details.

In short, we can derive
\begin{equation}
\|Sf-Sg\|_E\leq \sqrt{T} C(\epp) \|f-g\|_E.
\end{equation}
By choosing $T$ sufficiently small, \eqref{2.2} follows.

\textit{Step 3. $\dot{H}^k(\rr)$-Preserving Property.}\quad  Let the initial data $\bar{f}_0(x)$ be in $\dot{H}^k(\rr)$ for some $k \geq 1$ with $\|\bar{f}_0\|_{\dot{H}^k}\leq M$, we then modify the definition of the closed subset $E$ to be
\begin{equation}\nonumber
E=\{f\in B ~~| ~~\sup_{0\leq t\leq T}\|f(\cdot,t)\|_{C^{2,\gamma}(\rr)} \leq 2M\text{,} ~~\sup_{0\leq t \leq T}\|f(\cdot,t)\|_{\dot{H}^k(\rr)} \leq 2M \}
\end{equation}
equipped with the norm
\begin{equation}
\|f\|_E = \sup_{0\leq t\leq T}\|f(\cdot,t)\|_{C^{2,\gamma}(\rr)\cap{\dot{H}^k(\rr)}}.
\end{equation}
We need to prove that $S$ still maps $E$ to $E$ and that $S$ is contractive. The arguments are again similar. For instance, we can estimate the part of the homogeneous
derivatives as follows:
\begin{equation}\nonumber
\begin{split}
\sup_{0\leq t\leq T} \|\pa_x^k Sf\|_{L^2(\rr)} & \leq \|\pa_x^k f_0\|_{L^2}
                            + \sup_{t}\int_0^t \|\pa_x K^\epp(t-s)\ast\pa_x^{k-1}F(f(s))\|_{L^2}ds  \\[5pt]
                                               & \leq M + \sqrt{T}C(\epp) \sup_{t} \|\pa_x^{k-1}F(f(s))\|_{L^2}.
\end{split}
\end{equation}
To treat the right-hand side of the above inequality, we use \eqref{FFF} and \eqref{LinearPart} to derive that
\begin{equation}\nonumber
\begin{split}
 \sup_{t} \|\pa_x^{k-1}F(f(s))\|_{L^2} \leq & \sup_{t} \|\Lambda^{1-\epp}\pa_x^{k-1}f\|_{L^2} \\[5pt]
                    &+ \sup_{t} \int_{\rr}\|\pa_x^{k-1}\big{(}\frac{\pa_x \Delta_\alpha^\epp f(x) (\Delta_\alpha^\epp f(x))^2}
                    {1+(\Delta_\alpha^\epp f(x))^2}\big{)}\|_{L^2} d\alpha, \\[5pt]
                                  \leq& CM + S_1 + \text{lower order terms}, 
\end{split}
\end{equation}
where ~$S_1$~ denotes the most singular part. In the above inequality, we have also applied the following estimate
\begin{equation}\nonumber
\begin{split}
 \|\Lambda^{1-\epp}\pa_x^{k-1}f\|_{L^2(\rr)} \leq & \sum_j 2^{j(k-\epp)} \|f_j\|_{L^2(\rr)}   \\[5pt]
                                             \leq & \sum_{j\leq N_0}2^{j(k-\epp)} \|f_j\|_{L^2} + \sum_{j\geq N_0} 2^{j(k-\epp)}\|f_j\|_{L^2} \\[5pt]
                                             \leq &  \sum_{j\leq N_0} 2^{j(k-\epp-\frac{1}{2})} \|f_j\|_{L^\infty}+ \sum_{j\geq N_0} 2^{-j\epp}\||\pa|^k f_j\|_{L^2} \\[5pt]
                                             \leq &  C(\epp)(\|f\|_{L^\infty}+ \|f\|_{\dot{H}^k}).
\end{split}
\end{equation}
Here we have used the standard frequency cut-offs and Bernstein's inequality \cite{Chemin},
where $f_j$ represents the Fourier localization of $f$ and $\widehat{f_j}(\xi)$ is supported on the annulus with the inner and outer radius both around $2^j$.

We can estimate $S_1$ as follows:
\begin{equation}\nonumber
\begin{split}
S_1   &=  \sup_{t} \int_{\rr}\|\frac{\pa_x^k \Delta_\alpha^\epp f(x) (\Delta_\alpha^\epp f(x))^2}{1+(\Delta_\alpha^\epp f(x))^2}\|_{L^2} d\alpha\\[5pt]
      & \leq C\sup_{t}\|\pa_x^k f\|_{L^2} \|f\|_{L^\infty}^2 \int_{|\alpha|>1} \frac{1}{|\alpha|^{3-3\epp}}d\alpha
 + C \sup_{t} \|\pa_x^k f\|_{L^2} \int_{|\alpha|<1}\frac{1}{|\alpha|^{1-\epp}}d\alpha  \\[5pt]
      &\leq C(\epp)M^{3}.
\end{split}
\end{equation}
This completes the proof of Theorem \ref{LS}.
\end{proof}

Based on Theorem \ref{LS}, we claim that the approximate solutions $f(x,~t)$ have the same spatial asymptotics as the initial data $\bar{f}_0(x)$ at infinity. In fact, integrate \eqref{LAS}
in time and take limits to find that
\begin{equation}\label{HDOTK}
\begin{split}
\limit{x\rightarrow \infty}&|f(x,t)-\bar{f}_0(x)|   \\
&\leq \epp \limit{x\rightarrow \infty}\int_0^t |f_{xx}(x,s)|ds +
 \limit{x\rightarrow \infty}\int_0^t\int_{\rr}|\frac{\pa_x \Delta_\alpha^\epp f(x,s)}{1+(\Delta_\alpha^\epp f(x,s))^2}|d\alpha ds. 
\end{split}
\end{equation}
The righthand-side terms in \eqref{HDOTK} involve only the derivatives of $f(x,t)$, which
vanish at infinity since $f(x,t)$ belong to $ C^{2,\gamma}(\rr)\cap\dot{H}^1(\rr)\cap\dot{H}^3(\rr)$.
Exchanging the limit and the integration (due to the uniform convergence of the integral), one can find that the right-hand side goes to 0.

\section{Uniform \textit{a Priori} Estimates of the Approximate Solutions}

In this section we derive uniform estimates for the approximate solutions obtained in Section 3. More precisely, we have the following Lemma

\begin{lem}\label{3.2}
Let $f^\epp(x,t)$ be a regular solution of the system \eqref{LAS} with initial condition
\eqref{LIDA} \eqref{LIDB}. Then there is a positive $T$ and for any $0\leq t\leq T$, 
\begin{equation}\nonumber
\|f^\epp\|_{C^{2,\gamma}(\rr)\cap\dot{H}^1(\rr)\cap \dot{H}^3(\rr)}(t) \les \|\bar{f}_0\|_{C^{2,\gamma}(\rr)\cap\dot{H}^1(\rr)\cap \dot{H}^3(\rr)}. \quad\quad\quad  
\end{equation}
\end{lem}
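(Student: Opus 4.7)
The overall strategy is to derive an energy-type inequality for
\[
\mathcal{E}(t):=\|f^{\epp}(\cdot,t)\|_{C^{2,\gamma}}^{2}+\|f^{\epp}(\cdot,t)\|_{\dot{H}^{1}}^{2}+\|f^{\epp}(\cdot,t)\|_{\dot{H}^{3}}^{2}
\]
of the form $\mathcal{E}'(t)\leq P(\mathcal{E}(t))$, where $P$ is a polynomial whose coefficients are independent of $\epp$, and then to close on a short interval $[0,T]$ whose length depends only on $\mathcal{E}(0)$. The scheme breaks into three pieces: an $L^{\infty}$ maximum principle for $f^{\epp}$, Hilbert-space energy estimates in $\dot{H}^{1}$ and $\dot{H}^{3}$, and a Schauder-type estimate recovering the $C^{2,\gamma}$ piece from the mild formulation. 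The decomposition of $\widetilde{F}(f^{\epp})$ into the linear part $-(1-\epp)\Lambda^{1-\epp}f^{\epp}$ and the nonlinear part, already used in Theorem \ref{LS}, is again central; the new point is uniformity in $\epp$.

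For the $L^{\infty}$ bound I would use the Cordoba--Cordoba maximum principle \cite{Maximum}. At an interior maximum $x_{0}$ of $f^{\epp}(\cdot,t)$, the artificial viscosity satisfies $\epp f^{\epp}_{xx}(x_{0})\leq 0$ and $\pa_x f^{\epp}(x_{0})=0$; after the $\alpha\mapsto-\alpha$ symmetrization, $\widetilde{F}(f^{\epp})(x_{0})=\pa_x\int_{\rr}\arctan(\Delta_\alpha^{\epp}f^{\epp})d\alpha$ evaluated at $x_{0}$ can be written as a manifestly non-positive quantity, using only the odd symmetry of $\arctan$ and the regularity of $\Delta_\alpha^{\epp}$ in \eqref{difference}. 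This yields $\|f^{\epp}(\cdot,t)\|_{L^{\infty}}\leq\|\bar{f}_{0}\|_{L^{\infty}}$, independently of $\epp$.

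For the $\dot{H}^{1}$ and $\dot{H}^{3}$ pieces, I would differentiate the equation once and three times in $x$ and pair against $\pa_x f^{\epp}$ and $\pa_x^{3}f^{\epp}$, respectively. The viscous term produces the non-negative dissipation $\epp\|\pa_x^{j}f^{\epp}\|_{L^{2}}^{2}$ for $j=2,4$, while the linear fractional term $-(1-\epp)\Lambda^{1-\epp}f^{\epp}$, identified in \eqref{LinearPart}, gives further non-negative fractional dissipation. The nonlinear contribution
\[
\mathrm{NL}(f^{\epp})=-PV\int_{\rr}\frac{\pa_x\Delta_\alpha^{\epp}f^{\epp}\,(\Delta_\alpha^{\epp}f^{\epp})^{2}}{1+(\Delta_\alpha^{\epp}f^{\epp})^{2}}\,d\alpha
\]
is controlled by splitting $\int_{|\alpha|<1}+\int_{|\alpha|>1}$ and bounding by a polynomial in $\|f^{\epp}\|_{L^{\infty}}$, $\|\pa_x f^{\epp}\|_{L^{\infty}}$ and the homogeneous seminorms, exactly along the lines of the $S_{1}, Y_{1}, Y_{2}, Y_{3}$ estimates in Theorem \ref{LS}. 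The $C^{2,\gamma}$ piece is then recovered by freezing $\widetilde{F}(f^{\epp})$ as a source in the mild representation $f^{\epp}(t)=e^{\epp t\Delta}\bar{f}_{0}+\int_{0}^{t}e^{\epp(t-s)\Delta}\widetilde{F}(f^{\epp}(s))\,ds$ and applying the same heat-kernel Schauder estimates as in \emph{Step 1} of Theorem \ref{LS}, with $\widetilde{F}$ bounded in $C^{\gamma}$ through the energy norms just established.

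The principal obstacle is keeping every constant independent of $\epp$. The bounds in Theorem \ref{LS} contain factors $C(\epp)$ that blow up as $\epp\to 0^{+}$, arising either from the endpoint $\alpha$-integrals used to estimate $\Lambda^{1-\epp}$ pointwise or from Bernstein inequalities applied near the frequency cutoff. The plan to avoid these is to never move derivatives off $\Lambda^{1-\epp}$ into an $L^{\infty}$ norm, but rather to keep the dissipative quadratic form intact. Concretely, for the top-order contribution to the $\dot{H}^{3}$ estimate I would symmetrize the $\alpha$-integral in $\int\pa_x^{3}f^{\epp}\cdot\pa_x^{3}\mathrm{NL}(f^{\epp})\,dx$, use the trivial bound $(\Delta_\alpha^{\epp}f^{\epp})^{2}/(1+(\Delta_\alpha^{\epp}f^{\epp})^{2})\leq 1$ to dominate the most singular piece by the linear fractional dissipation, and thereby reduce the remaining terms to commutator-type integrals whose bounds are $\epp$-uniform polynomials in $\mathcal{E}$. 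Once this cancellation is carried out, a Gronwall argument on $\mathcal{E}(t)$ closes the lemma on an interval $T$ determined only by $\|\bar{f}_{0}\|_{C^{2,\gamma}\cap\dot{H}^{1}\cap\dot{H}^{3}}$.
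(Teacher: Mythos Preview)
Your overall architecture—maximum principle for $L^\infty$, energy estimates for $\dot{H}^1$ and $\dot{H}^3$ with the linear fractional piece treated as dissipation, symmetrization for the top-order commutator—matches the paper's proof closely. The gap is in how you propose to recover the $C^{2,\gamma}$ piece.

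You plan to obtain $\|f^{\epp}\|_{C^{2,\gamma}}$ from the mild formulation, ``applying the same heat-kernel Schauder estimates as in Step~1 of Theorem~\ref{LS}.'' But those estimates are not uniform in $\epp$: in Step~1 the bound on $|\pa_x^2 Sf|_{\dot C^\gamma}$ passes through $\int_0^t\|\pa_x K^{\epp}(\cdot,t-s)\|_{L^1}\,ds\simeq \sqrt{t/\epp}$, which is exactly the $C(\epp)$ factor you flagged as the principal obstacle. You explain how to avoid $\epp$-dependence in the $\dot H^3$ energy estimate via symmetrization, but you do not propose any mechanism to remove it from the Schauder step, and there is none—the smoothing in the mild representation genuinely degenerates as $\epp\to 0$. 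Moreover, the $\dot H^3$ estimate itself needs $|\pa_x^2 f^{\epp}|_{\dot C^\gamma}$ on the right-hand side (this is where the $|\alpha|^{-1+\gamma}$ integrability near $\alpha=0$ comes from), so leaving the $C^{2,\gamma}$ control to a separate $\epp$-dependent argument prevents the $\dot H^3$ inequality from closing uniformly.

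The paper sidesteps this entirely: it never estimates $C^{2,\gamma}$ through the heat kernel. Instead, it closes the $\dot H^3$ inequality against $C(\|f\|_{C^{2,\gamma}})\|\pa_x^3 f\|_{L^2}^2$ and then invokes the one-dimensional Sobolev embedding $L^\infty\cap\dot H^1\cap\dot H^3\hookrightarrow C^{2,\gamma}$ (valid for $0<\gamma<\tfrac12$) to replace $\|f\|_{C^{2,\gamma}}$ by a polynomial in $\|f\|_{L^\infty}$ and $\|\pa_x^3 f\|_{L^2}$. This turns the $\dot H^3$ inequality into a closed ODE $\frac{d}{dt}\|\pa_x^3 f\|_{L^2}^2\les C(\|f\|_{L^\infty})\|\pa_x^3 f\|_{L^2}^{k_0}$ with constants independent of $\epp$, and the $C^{2,\gamma}$ bound then comes for free from the embedding. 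Replace your Schauder step by this embedding and your proof goes through.
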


\begin{proof} \quad As before, we drop the superscript $\epp$ in $f^\epp(x,t)$. The basic strategy of the proof is as follows. We first
estimate the $L^\infty$ norm of solutions of the approximate system \eqref{LAS}. To get
estimates in H\"{o}lder spaces,  we will apply the energy method and the classical Sobolev
imbeddings. Since the (artificial) viscosity term and the regularization of the
nonlinearity do no effect various estimates, we can get uniform in $\epp$ estimates and which can easily pass to limits to get desired estimates for the original system \eqref{OS}.

\textit{Maximum Principle} \quad Here we show the maximum value of $f(x,t)$ is
bounded. A similar argument holds for the minimum value. There are two possibilities:

\rm{(i)} $\bar{f}_0(x)$ attains its maximum value which is larger than $a$ in a finite
x-interval \\
$[-N,N]$. For this case, we follow the arguments in \cite{Maximum}. Let
\begin{equation}\nonumber
M(t)=\max_{x}f(x,t). 
\end{equation}
Then $M(t)$ is differentiable for almost every $t$. Suppose that ~$f(x,t)$ attain its
maximum value which is larger than $a$, then, by the spatial asymptotics \eqref{LIDA},
there is $x_t ~(x_t\neq \pm\infty)$ such that
\begin{equation}\nonumber
f(x_t,t)=\max_{x}f(x,t).
\end{equation}
It follows that
\begin{equation}\nonumber
f_x(x_t,t)=0,   \qquad  f_{xx}(x_t,t)\leq 0.
\end{equation}
As in \cite{Maximum}, we have (for almost all $t$)
\begin{equation}\nonumber
M\prim(t)=f_t(x_t,t).
\end{equation}

Now we calculate the derivative of $M(t)$
\begin{equation}\nonumber
\begin{split}
M\prim(t) & =\epp f_{xx}(x)|_{x=x_t}+\pa_x\int_{\rr} \arctan{(\Delta_\alpha^\epp f(x,t))} d\alpha |_{x=x_t} \\[5pt]
          & =I_1 + I_2 .
\end{split}
\end{equation}
Obviously one has $I_1 \leq 0$ and also
\begin{equation}\nonumber
\begin{split}
I_2=&\int_\rr \frac{f_x(x_t)}{\frac{x_t-\alpha}{|x_t-\alpha|^{\epp}}}\frac{1}{1+(\Delta_\alpha^\epp f(x_t,t))^2} d\alpha \\[5pt]
    &- (1-\epp) \int_\rr \frac{f(x_t)-f(\alpha)}{|x_t-\alpha|^{2-\epp}}\frac{1}{1+(\Delta_\alpha^\epp f(x_t,t))^2} d\alpha. 
\end{split}
\end{equation}
Since $x_t$ is a maximum point, one gets $f_x(x_t,t)=0$ and $f(x_t)-f(\alpha)\geq 0$.
Consequently, for $\epp$ smaller than 1, one has also that $I_2 \leq 0$. Thus $M\prim(t) \leq 0$ for almost every $t\in [0,T]$, which gives $M(t)\leq M(0)$.

\rm{(ii)}\ $\bar{f}_0(x)$ attains its maximum value $a$ at infinity.
We want to show the $M(t)$ can not be larger than $a$. Indeed, if for some
$ 0 < t < T$, $M(t)$ is larger than $a$, then the arguments in case \rm{(i)}
applies, and we conclude that $M(t)$ is decreasing at $t$. This suffices to imply again $M(t)$ is not larger than $a$ as it is true for $t=0$.

Analogously, we can get $m(t)=\min_{x}f(x,t) \geq m(0)$. Therefore, we obtain the maximum principle for the $L^\infty$ norm
\begin{equation}\label{3.m}
\|f\|_{L^\infty(\rr)}(t) \leq \|\bar{f}_0\|_{L^\infty(\rr)}. \\[10pt]
\end{equation}

\textit{Estimation on ~$\|\pa_x f(\cdot,t)\|_{L^2(\rr)}$} \quad Let us
differentiate once the system \eqref{OS} then multiply it by $f_x$, and then do an integration to derive that
\begin{equation}\label{3.3}
\begin{split}
   \frac{d}{dt}\|\pa_x f(\cdot,t)\|_{L^2}^2 + & \|\Lambda^{\frac{1}{2}}\pa_x f(\cdot,t)\|_{L^2}^2 \\[5pt]
  =& \int_{\rr} PV \int \pa_x( \frac{\pa_x \Delta_\alpha f(x,t)(\Delta_\alpha f(x,t))^2}{1+ (\Delta_\alpha f(x,t))^2} ) d\alpha \pa_x f(x,t)dx \\[5pt]
  =& K_1+K_2,
\end{split}
\end{equation}
where
\begin{equation}\nonumber
\begin{split}
 K_1 &=\int PV \int \frac{\pa_x^2 \Delta_\alpha f(x,t)(\Delta_\alpha f(x,t))^2}{1+ (\Delta_\alpha f(x,t))^2} d\alpha \pa_x f(x,t) dx, \\[5pt]
 K_2 &=2\int PV \int \frac{(\pa_x \Delta_\alpha f(x,t))^2 \Delta_\alpha f(x,t)}{(1+ (\Delta_\alpha f(x,t))^2)^2} d\alpha \pa_x f(x,t) dx. 
\end{split}
\end{equation}
$K_1$ can be estimated as follows:
\begin{equation}\nonumber
\begin{split}
|K_1|  \les & \int_{|\alpha|<1} \frac{1}{|\alpha|^{1-\gamma}}|\pa_x^2 f|_{\dot{C}^\gamma} \|\pa_x f\|_{L^\infty} \|\pa_x f\|_{L^2} \|\pa_x f\|_{L^2} d\alpha  \\[5pt]
          & + \int_{|\alpha|>1} \frac{1}{|\alpha|^2} \|\pa_x^2 f\|_{L^\infty} \|f\|_{L^\infty} \|\pa_x f\|_{L^2}^2 d\alpha  \\[5pt]
     \les & C(\|f\|_{L^\infty},|\pa_x^2 f|_{\dot{C}^\gamma} ) \|\pa_x f\|_{L^2}^2, 
\end{split}
\end{equation}
where we have used
\begin{equation}\nonumber
\begin{split}
\|\frac{f(\cdot)-f(\cdot-\alpha)}{\alpha}\|_{L_x^p} = \|\int_0^1 \pa_x f(\cdot-\alpha+ \theta\alpha)d\theta\|_{L^p_x} \les \|\pa_x f\|_{L_x^p}
\end{split}
\end{equation}
and
\begin{equation}\nonumber
\begin{split}
\|\frac{\pa_x^2f(\cdot)-\pa_x^2f(\cdot-\alpha)}{\alpha}\|_{L^\infty} \les \frac{1}{|\alpha|^{1-\gamma}} |\pa_x^2 f|_{\dot{C}^{\gamma}}.
\end{split}
\end{equation}
Similarly, we can deduce
\begin{equation}\nonumber
\begin{split}
|K_2| & \les \int_{|\alpha|<1} \|\pa_x^2 f\|_{L^\infty}^2 \|\pa_x f\|_{L^2}^2 d\alpha
+ \int_{|\alpha|>1} \frac{1}{\alpha^2} \|\pa_x f\|_{L^\infty}^2 \|\pa_x f\|_{L^2}^2 d\alpha  \\[5pt]
    & \les C(\|f\|_{L^\infty}, |\pa_x^2 f|_{\dot{C}^{\gamma}}) \|\pa_x f\|_{L^2}^2.
\end{split}
\end{equation}
Combining the estimates of $K_1$ and $K_2$ with \eqref{3.3}, we obtain
\begin{equation}\label{3.4}
\frac{d}{dt}\|\pa_x f(\cdot,t)\|_{L^2}^2 + \|\Lambda^{\frac{1}{2}}\pa_x f(\cdot,t)\|_{L^2}^2
  \les C(\|f\|_{C^{2,\gamma}}) \|\pa_x f\|_{L^2}^2. \\[10pt]
\end{equation}

\textit{Estimation on $\|\pa_x^3 f(\cdot,t)\|_{L^2(\rr)}$}\quad Again, the
standard energy method apply to the higher order derivatives yields
\begin{equation}\label{3.5}
\begin{split}
    \frac{d}{dt}\|\pa_x^3 f(\cdot,t)\|_{L^2}^2 + & \|\Lambda^{\frac{1}{2}}\pa_x^3 f(\cdot,t)\|_{L^2}^2   \\[5pt]
   =&\int_{\rr} PV \int \pa_x^3( \frac{\pa_x \Delta_\alpha f(x,t)(\Delta_\alpha f(x,t))^2}{1+ (\Delta_\alpha f(x,t))^2} ) d\alpha \pa_x^3 f(x,t)dx \\[5pt]
   =&K_3+K_4+K_5+K_6,
\end{split}
\end{equation}
where
\begin{equation}\nonumber
\begin{split}
K_3&= \int PV \int \frac{\pa_x^4 \Delta_\alpha f (\Delta_\alpha f)^2}{1+(\Delta_\alpha f)^2} d\alpha \pa_x^3 f(x) dx ,\\[5pt]
K_4&= \int PV \int \pa_x^3 \Delta_{\alpha}f \pa_x \big{(}\frac{(\Delta_\alpha f)^2}{1+(\Delta_\alpha f)^2} \big{)} d\alpha \pa_x^3 f(x) dx  ,\\[5pt]
K_5&= \int PV \int \pa_x^2 \Delta_{\alpha}f \pa_x^2 \big{(}\frac{(\Delta_\alpha f)^2}{1+(\Delta_\alpha f)^2} \big{)} d\alpha \pa_x^3 f(x) dx,  \\[5pt]
K_6&= \int PV \int \pa_x   \Delta_{\alpha}f \pa_x^3 \big{(}\frac{(\Delta_\alpha f)^2}{1+(\Delta_\alpha f)^2} \big{)} d\alpha \pa_x^3 f(x) dx.
\end{split}
\end{equation}
Let us first estimate $K_3$. Since the derivatives of $f(x,t)$ vanish at infinity, we use integration by parts to write
\begin{equation}\nonumber
\begin{split}
K_3  = & \int PV \int \frac{\pa_x \pa_x^3 f(x)-\pa_x \pa_x^3 f(x-\alpha)}{\alpha} \big{(}\frac{(\Delta_\alpha f)^2}{1+(\Delta_\alpha f)^2} \big{)}
\pa_x^3 f(x) d\alpha dx  \\[5pt]
     = & \frac{1}{2} \int PV \int \frac{1}{\alpha} \big{(}\frac{(\Delta_\alpha f)^2}{1+(\Delta_\alpha f)^2} \big{)} d\alpha \pa_x(\pa_x^3 f)^2
    dx  \\[5pt]
       & + \int PV \int \frac{\pa_\alpha \pa_x^3 f(x-\alpha)}{\alpha} \big{(}\frac{(\Delta_\alpha f)^2}{1+(\Delta_\alpha f)^2} \big{)} d\alpha
    \pa_x^3 f(x) dx \\[5pt]
     = & K_{31}+K_{32} .
\end{split}
\end{equation}
Here $K_{31}$ can be estimated as follows:
\begin{equation}\nonumber
\begin{split}
K_{31}&= -\frac{1}{2} \int PV \int \frac{1}{\alpha} \pa_x(1-\frac{1}{1+(\Delta_\alpha f)^2}) d\alpha (\pa_x^3 f)^3 dx  \\[5pt]
   &=- \int PV \int \frac{1}{\alpha} \frac{\Delta_\alpha f (\pa_x \Delta_\alpha f)}{(1+(\Delta_\alpha f)^2)^2} d\alpha (\pa_x^3 f)^2 dx \\[5pt]
   &=- \int (\pa_x^3 f)^2 PV \int_{|\alpha|>1}d\alpha dx - \int (\pa_x^3 f)^2 PV \int_{|\alpha|<1} d\alpha dx \\[5pt]
   &\les \|\pa_x f\|_{L^\infty}^2 \|\pa_x^3 f\|_{L^2}^2 \int_{|\alpha|>1} \frac{1}{|\alpha|^2}d\alpha
   + M(f) \|\pa_x^3 f\|_{L^2}^2,
\end{split}
\end{equation}
where
\begin{equation}\nonumber
M(f)= \sup_{x} |PV \int_{|\alpha|<1}\frac{1}{\alpha} \frac{(\frac{f(x)-f(x-\alpha)}{\alpha})(\frac{\pa_x f(x)-\pa_x f(x-\alpha)}{\alpha})}{(1+(\frac{f(x)-f(x-\alpha)}{\alpha})^2)^2} d\alpha |. 
\end{equation}
We thus need to estimate $M(f)$. This can be done as follows:
\begin{equation}\nonumber
\begin{split}
M(f) \les & \sup_{x}|PV \int_{|\alpha|<1}\frac{1}{\alpha} \frac{(\frac{f(x)-f(x-\alpha)}{\alpha}-\pa_x f(x))(\frac{\pa_x f(x)-\pa_x f(x-\alpha)}{\alpha})}{(1+(\frac{f(x)-f(x-\alpha)}{\alpha})^2)^2} d\alpha  |   \\[5pt]
          & +\sup_{x} |PV \int_{|\alpha|<1}\frac{1}{\alpha} \frac{\pa_x f(x)(\frac{\pa_x f(x)-\pa_x f(x-\alpha)}{\alpha}-\pa_x^2 f(x))}{(1+(\frac{f(x)-f(x-\alpha)}{\alpha})^2)^2} d\alpha  |    \\[5pt]
          & +\sup_{x} |PV \int_{|\alpha|<1} \frac{1}{\alpha} \pa_x f(x)\pa_x^2 f(x) \big{(} \frac{1}{(1+(\Delta_\alpha f)^2)^2} -\frac{1}{(1+(\pa_x f)^2)^2} \big{)} d\alpha  |  \\[5pt]
          & + \sup_{x} |PV \int_{|\alpha|<1} \frac{1}{\alpha} \frac{\pa_x f(x) \pa_x^2 f(x)}{(1+ (\pa_x f)^2)^2} d\alpha |  \\[5pt]
\triangleq & K_{311} + K_{312} + K_{313} + K_{314}.
\end{split}
\end{equation}
Note that $K_{314}$ vanishes due to principle value integral. To estimate $K_{311}$, we observe the following formula:
\begin{equation}\nonumber
\begin{split}
   &\sup_{x}|\frac{1}{\alpha}(\frac{f(x)-f(x-\alpha)}{\alpha}-\pa_x f(x))| \\[5pt]
      = & \sup_{x}|\frac{1}{\alpha}\int_0^1 \pa_x f(x-\alpha+\theta\alpha)-\pa_x f(x)
 d\theta|  \\[5pt]
                              = &\sup_{x}|\frac{1}{\alpha^{1-\gamma}} \int_0^1 \frac{\pa_x f(x-\alpha+\theta\alpha)-\pa_x f(x)}{((\theta-1)\alpha)^\gamma}(\theta-1)^\gamma d\theta| \\[5pt]
                               \les  &\frac{1}{|\alpha|^{1-\gamma}} |\pa_x f|_{\dot{C}^{\gamma}}. 
\end{split}
\end{equation}
Consequently, one has
\begin{equation}\nonumber
|K_{311}| \les \|\pa_x^2 f\|_{L^\infty} |\pa_x f|_{\dot{C}^\gamma}.
\end{equation}
By similar reasoning, one also concludes
\begin{equation}\nonumber
\begin{split}
|K_{312}| &\les \|\pa_x f\|_{L^\infty}|\pa_x^2 f|_{\dot{C}^\gamma}, \\[5pt]
|K_{313}| &\les \|\pa_x^2 f\|_{L^\infty} \|\pa_x f \|_{L^\infty}^2  (1+\|\pa_x f\|_{L^\infty}^2 )|\pa_x f|_{\dot{C}^\gamma}, 
\end{split}
\end{equation}
which yields the estimate of $K_{31}$:
\begin{equation}\nonumber
|K_{31}| \les C(\|f\|_{L^\infty},|\pa_x^2 f|_{\dot{C}^\gamma}) \|\pa_x^3 f\|_{L^2}^2.
\end{equation}
To estimate $K_{32}$, one again use the integration by parts to see
\begin{equation}\nonumber
\begin{split}
K_{32} = & -\int PV \int \frac{\pa_x^3 f(x)-\pa_x^3 f(x-\alpha)}{\alpha^2} \frac{(\Delta_\alpha f)^2}{1+(\Delta_\alpha f)^2}  d\alpha \pa_x^3 f(x) dx  \\[5pt]
      & + 2\int PV \int \frac{\pa_x^3 f(x)-\pa_x^3 f(x-\alpha)}{\alpha} \frac{\Delta_\alpha f \pa_\alpha \Delta_\alpha f}{(1+ (\Delta_\alpha f)^2)^2} d\alpha \pa_x^3 f(x) dx  \\[5pt]
    = & K_{321}+K_{322}.
\end{split}
\end{equation}
By changing variables in $K_{321}$, one obtains
\begin{equation}\nonumber
\begin{split}
K_{321} =& -PV \int \int \frac{\pa_x^3 f(x)-\pa_x^3 f(\alpha)}{(x-\alpha)^2} \frac{(\Delta_{x-\alpha} f)^2}{1+(\Delta_{x-\alpha} f)^2} \pa_x^3 f(x) d\alpha dx \\[5pt]
    =& PV \int \int \frac{\pa_x^3 f(x)-\pa_x^3 f(\alpha)}{(x-\alpha)^2} \frac{(\Delta_{x-\alpha} f)^2}{1+(\Delta_{x-\alpha} f)^2} \pa_x^3 f(\alpha) d\alpha dx \\[5pt]
    =& -\frac{1}{2}PV \int \int \frac{(\pa_x^3 f(x)-\pa_x^3 f(\alpha))^2}{(x-\alpha)^2} \frac{(\Delta_{x-\alpha} f)^2}{1+(\Delta_{x-\alpha} f)^2} d\alpha dx   \\[5pt]
    \leq & 0.
\end{split}
\end{equation}
For $K_{322}$, one proceeds as follows:
\begin{equation}\nonumber
\begin{split}
K_{322} =& 2\int |\pa_x^3 f(x)|^2 PV \int \frac{1}{\alpha} \frac{\pa_x f(x-\alpha)-\Delta_\alpha f}{\alpha}  \frac{\Delta_\alpha f}{(1+(\Delta_\alpha f)^2)^2}d\alpha dx \\[5pt]
     & +2 \int \pa_x^3 f(x) PV \int \pa_x^3 f(\alpha) \frac{f(x)-f(\alpha)}{(x-\alpha)^2} \\[5pt]
     &   \qquad \qquad \qquad \qquad \times \frac{\frac{f(x)-f(\alpha)}{x-\alpha}-\pa_x f(\alpha)}{x-\alpha}
      \frac{1}{(1+(\Delta_{x-\alpha} f)^2)^2}d\alpha dx   \\[5pt]
    =& K_{3221} + K_{3222}.
\end{split}
\end{equation}
Here $K_{3221}$ can be estimated similarly as for $K_{31}$. For $K_{3222}$, we calculate further to get
\begin{equation}\nonumber
\begin{split}
K_{3222} = & 2PV \int\int \pa_x^3 f(x) \pa_x^3 f(\alpha) \frac{f(x)-f(\alpha)}{(x-\alpha)^2} \frac{\frac{1}{4}\pa_x^2 f(\alpha)+\frac{1}{4} \pa_x^2 f(x)}{(1+(\Delta_{x-\alpha}f)^2)^2}  d\alpha dx  \\[5pt]
      & +2 PV \int\int \frac{\pa_x^3 f(x) \pa_x^3 f(\alpha) }{(1+(\Delta_{x-\alpha}f)^2)^2} \frac{f(x)-f(\alpha)}{(x-\alpha)^2}  \\[5pt]
      &\ \ \ \ \ \ \ \ \qquad  \times \Big{(} \frac{\frac{f(x)-f(\alpha)}{(x-\alpha)}-\pa_x f(\alpha)}{x-\alpha} -\frac{1}{4}\pa_x^2 f(x)-\frac{1}{4}\pa_x^2 f(\alpha)   \Big{)}d\alpha dx \\[5pt]
    = & K_{3222}^1 + K_{3222}^2 .
\end{split}
\end{equation}
By exchanging variables again, one finds that $K_{3222}^1=0$. The term $K_{3222}^2$ can be estimated as follows:
\begin{equation}\nonumber
\begin{split}
|K_{3222}^2| \les & \int |\pa_x^3 f(x)|^2 \int \frac{|f(x)-f(x-\alpha)|}{|\alpha|^2}  \\[5pt]
         &\ \ \ \times \big{(} \frac{\frac{f(x)-f(x-\alpha)}{\alpha}-\pa_x f(x-\alpha)}{\alpha}-\frac{1}{4}\pa_x^2 f(x)-\frac{1}{4}\pa_x^2 f(x-\alpha)    \big{)} d\alpha dx \\[5pt]
         & + \int |\pa_x^3 f(\alpha)|^2 \int \frac{|f(x+\alpha)-f(\alpha)|}{|x|^2}   \\[5pt]
         &\ \ \ \ \ \ \times \big{(} \frac{\frac{f(x+\alpha)-f(\alpha)}{x}-\pa_x f(\alpha)}{x}-\frac{1}{4}\pa_x^2 f(x+\alpha)-\frac{1}{4}\pa_x^2 f(\alpha)    \big{)}  dx d\alpha \\[5pt]
    \les & \int |\pa_x^3 f(x)|^2 (\int_{|\alpha|<1}d\alpha+\int_{|\alpha|>1}d\alpha)dx \\[5pt]
         &+ \int |\pa_x^3 f(\alpha)|^2
    (\int_{|x|<1}dx+\int_{|x|>1} dx) d\alpha \\[5pt]
    \les & C(\|f\|_{L^\infty}, |\pa_x^2 f|_{\dot{C}^{\gamma}}) \|\pa_x^3 f\|_{L^2}^2 .
\end{split}
\end{equation}
Collecting all these estimates related to $K_3$, we finally arrive at
\begin{equation} \label{3.6}
|K_3|\les C( \|f\|_{C^{2,\gamma}}) \|\pa_x^3 f\|_{L^2}^2. 
\end{equation}
For $K_4$, we can repeat much of the above with similar calculations:
\begin{equation}\nonumber
\begin{split}
K_4 =& 2PV \int |\pa_x^3 f(x)|^2 \int \frac{1}{\alpha} \frac{\Delta_\alpha f \pa_x \Delta_\alpha f}{(1+(\Delta_\alpha f)^2)^2} d\alpha dx \\[5pt]
     & -2 PV\int \pa_x^3 f(x) \int \frac{\pa_x^3 f(\alpha)}{x-\alpha} \frac{\Delta_{x-\alpha}f  \pa_x
     \Delta_{x-\alpha}f}{(1+(\Delta_{x-\alpha}f)^2)^2} d\alpha dx \\[5pt]
    =& K_{41} + K_{42}.
\end{split}
\end{equation}
$K_{41}$ can be estimated similarly as $K_{31}$:
\begin{equation}\nonumber
|K_{41}| \les C(\|f\|_{L^\infty}, |\pa_x^2 f|_{\dot{C}^\gamma}) \|\pa_x^3 f\|_{L^2}^2.
\end{equation}
We decompose $K_{42}$ into four terms:  $K_{42} = K_{421}+ K_{422} + K_{423} + K_{424}$, where
\begin{equation}\nonumber
\begin{split}
 K_{421}=& -2PV \int\int \pa_x^3 f(x) \pa_x^3 f(\alpha) \frac{\Delta_{x-\alpha}f-\pa_x f(\alpha)}{x-\alpha} \frac{\pa_x\Delta_{x-\alpha}f}{(1+(\Delta_{x-\alpha}f)^2)^2} d\alpha dx, \\[5pt]
 K_{422}=& -2PV \int\int \pa_x^3 f(x) \pa_x^3 f(\alpha) \pa_x f(\alpha)\\[5pt]
  &\qquad\qquad\quad\times\frac{\pa_x \Delta_{x-\alpha}f -\pa_x^2 f(\alpha)}{x-\alpha} \frac{1}{(1+(\Delta_{x-\alpha}f)^2)^2} d\alpha dx, \\[5pt]
\end{split}
\end{equation}

\begin{equation}\nonumber
\begin{split}
 K_{423} =& -2PV \int\int \pa_x^3 f(x) \pa_x^3 f(\alpha) \pa_x f(\alpha) \pa_x^2 f(\alpha) \\[5pt]
     & \ \ \ \ \ \qquad \qquad \times \frac{1}{x-\alpha} \Big{(} \frac{1}{(1+(\Delta_{x-\alpha}f)^2)^2}-\frac{1}{(1+(\pa_x f(\alpha))^2)^2} \Big{)} d\alpha dx , \\[5pt]
 K_{424}=& \quad 2\int PV\int \frac{\pa_x^3 f(x)}{\alpha-x}dx   \frac{\pa_x^3f(\alpha)\pa_x f(\alpha) \pa_x^2 f(\alpha)}{1+(\pa_x f(\alpha))^2} d\alpha .
\end{split}
\end{equation}
$K_{421}, K_{422}, K_{423}$ can be estimated similarly to $K_{311},K_{312},K_{313}$. For $K_{424}$, due to fact that the Hilbert transform is bounded in $L^p ~ (1<p<\infty)$ \cite{Stein}, one has
\begin{equation}\nonumber
\begin{split}
|K_{424}| \les & \|PV\int \frac{\pa_x^3 f(x)}{\alpha-x}dx \|_{L^2_\alpha} \|\frac{\pa_x^3f(\alpha)\pa_x f(\alpha) \pa_x^2 f(\alpha)}{1+(\pa_x f(\alpha))^2}\|_{L^2_\alpha} \\[5pt]
    \les & C(\|\pa_x f\|_{L^\infty}, \|\pa_x^2 f\|_{L^\infty}) \|\pa_x^3 f\|_{L^2}^2. 
\end{split}
\end{equation}
Thus one gets
\begin{equation}\label{3.7}
|K_4| \les  C(\|f\|_{L^\infty},|\pa_x^2 f|_{\dot{C}^\gamma}) \|\pa_x^3 f\|_{L^2}^2 .
\end{equation}
We can estimate $K_5$ directly as follows:
\begin{equation}\label{3.8}
\begin{split}
|K_5| \les & \int |\pa_x^3 f(x)| \int |\frac{\pa_x^2 f(x)-\pa_x^2 f(x-\alpha)}{\alpha}|  \\[5pt]
           &\ \ \ \ \times(|\pa_x \Delta_\alpha f|^2 + |\Delta_\alpha f \pa_x^2 \Delta_\alpha f|+ |\Delta_\alpha f|^2 |\pa_x \Delta_\alpha f|^2) d\alpha dx \\[5pt]
      \les & \|\pa_x^3 f\|_{L^2}^2 (\int_{|\alpha|<1} d\alpha + \int_{|\alpha|>1} d\alpha)  \\[5pt]
      \les & C(\|f\|_{L^\infty}, |\pa_x^2 f|_{\dot{C}^\gamma})\|\pa_x^3 f\|_{L^2}^2.  
\end{split}
\end{equation}
For $K_6$, the most troublesome term (after natural decompositions as we did above), which is also the
most singular part in $K_6$,  will be denoted by $K_{61}$. It is defined as follows:
\begin{equation}\nonumber
\begin{split}
K_{61} = & 2\int \pa_x^3 f(x) PV \int \frac{\Delta_\alpha f \pa_x \Delta_\alpha f \pa_x^3 \Delta_\alpha f}{1+(\Delta_\alpha f)^2}d\alpha dx  \\[5pt]
    = & 2 \int \pa_x^3 f(x) PV \int \frac{\pa_x^3 f(x)- \pa_x^3 f(\alpha)}{x-\alpha}
     \frac{\Delta_{x-\alpha} f \pa_x \Delta_{x-\alpha} f}{1+(\Delta_{x-\alpha}f)^2} d\alpha dx  \\[5pt]
    = & 2 \int |\pa_x^3 f(x)|^2 PV \int \frac{1}{\alpha} \frac{\Delta_\alpha f \pa_x\Delta_\alpha f}{1+(\Delta_\alpha f)^2} d\alpha dx \\[5pt]
      & -2 PV \int\int \frac{\pa_x^3 f(x) \pa_x^3f(\alpha)}{x-\alpha} \frac{\Delta_\alpha f \pa_x\Delta_\alpha f}{1+(\Delta_\alpha f)^2} d\alpha dx \\[5pt]
    = & K_{611}+ K_{612} ,
\end{split}
\end{equation}
where $K_{611}$ and $K_{612}$ can again be estimated similarly as $K_{41}$ and $K_{42}$. Therefore we have
\begin{equation}
\begin{split}
|K_{61}| \les  |K_{611}| + |K_{612}|  \les  C(\|f\|_{L^\infty}, |\pa_x^2 f|_{\dot{C}^\gamma}) \|\pa_x^3 f\|_{L^2}^2.
\end{split}
\end{equation}
Consequently, one concludes
\begin{equation}\label{3.9}
|K_6| \les C(\|f\|_{C^{2,\gamma}}) \|\pa_x^3 f\|_{L^2}^2.
\end{equation}
Combining \eqref{3.6}, \eqref{3.7}, \eqref{3.8} and \eqref{3.9}, we then obtain
\begin{equation}\label{3.10}
\begin{split}
\frac{d}{dt} \|\pa_x^3 f\|_{L^2}^2 + \|\Lambda^{\frac{1}{2}}\pa_x^3 f\|_{L^2}^2 \les & C(\|f\|_{C^{2,\gamma}}) \|\pa_x^3 f\|_{L^2}^2  \\[5pt]
                                                                                \les & C(\|f\|_{L^\infty}) \|\pa_x^3 f\|_{L^2}^{k_0},
\end{split}
\end{equation}
where we have used the classical Sobolev embedding from Sobolev spaces to H\"{o}lder Spaces.
Combining \eqref{3.m}, \eqref{3.4} and \eqref{3.10}, we conclude that, there exist a time $T$,  such that $\|f(\cdot,t)\|_{C^{2,\gamma}(\rr)\cap\dot{H}^1(\rr)\cap \dot{H}^3(\rr)}
(0<\gamma<\frac{1}{2})$ is uniformly bounded for all $t\in [0,T]$. 
\end{proof}

\section{Convergence of Local solutions in H\"{o}lder Spaces}

In this section, we are going to end the proof of Theorem \ref{LW} by establishing some
compactness results. From the \textit{a priori} estimates in Section 4, we have the
following uniform bounds in $\epp$
\begin{equation}\label{4.1}
\sup_{0\leq t \leq T}\|f^{\epp}\|_{C^{2,\gamma}(\rr)\cap\dot{H}^1(\rr)\cap \dot{H}^3(\rr)} \leq C(T), \quad  \sup_{0\leq t \leq T}\|\pa_t f^\epp\|_{C^{\gamma}(\rr)}\leq C(T),
\end{equation}
for $0<\gamma<\frac{1}{2}.$ One uses system \eqref{OS} to derive the second bound in \eqref{4.1}. In order to pass
to the limits $\epp \rightarrow 0$ for the approximate system \eqref{LAS}, we need
the following two elementary facts:

\begin{lem}\label{4.2}
Let $ u_k(x) $ be any bounded sequence in $C^{2,\gamma}(\rr)\cap\dot{H}^1(\rr)\cap
\dot{H}^3(\rr)(0<\gamma<\frac{1}{2})$ with the uniform spatial asymptotics \eqref{LIDA}. Then $u_k(x)$ is strongly convergent in ~$C^{2,\gamma} (\rr)$ up to a subsequence.
\end{lem}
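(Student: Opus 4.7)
I would combine local Arzel\`a--Ascoli compactness with a proof that the $C^{2,\gamma}$ mass of $u_k$ is equi-localized in a fixed bounded interval. The $C^{2,\gamma}(\rr)$ bound gives, on each bounded interval, equicontinuity of $u_k,\partial_x u_k,\partial_x^2 u_k$; a diagonal extraction produces a subsequence converging in $C^2_{\mathrm{loc}}(\rr)$ to some $u$. The $\dot{H}^3$ control, via the identity $\partial_x^2 u_k(x)-\partial_x^2 u_k(y)=\int_y^x\partial_z^3 u_k(z)\,dz$ and Cauchy--Schwarz, yields a uniform $1/2$-H\"older bound on $\partial_x^2 u_k$. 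A standard interpolation between local $C^2$ convergence and this uniform $1/2$-H\"older bound then upgrades the convergence to $C^{2,\gamma}$ on any compact interval, since $\gamma<1/2$.

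\textbf{Equi-decay at infinity.} I would subtract a fixed smooth interpolant $g$ with $g\equiv a$ for $x<-M$ and $g\equiv b$ for $x>M$, and work with $v_k:=u_k-g$. The uniform asymptotics \eqref{LIDA} give $v_k(x)\to 0$ as $|x|\to\infty$ uniformly in $k$. The central step is a bootstrap on derivatives: if $|\partial_x v_k(x_0)|>\epsilon$ for some $|x_0|$ large, the uniform $L^\infty$ bound on $\partial_x^2 v_k$ forces $|\partial_x v_k|\geq\epsilon/2$ on an interval of fixed length, so $v_k$ must change by a fixed amount there, contradicting the uniform decay of $v_k$. The same reasoning, now applied one level up using the uniform $\gamma$-H\"older bound on $\partial_x^2 v_k$, yields uniform decay of $\partial_x^2 v_k$ at infinity. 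For the $\gamma$-H\"older seminorm of $\partial_x^2 v_k$ on $\{|x|>R\}$ I would split by distance: for $|x-y|$ small, the $1/2$-H\"older bound gives a factor $|x-y|^{1/2-\gamma}$; for $|x-y|$ large, the pointwise smallness makes the quotient small. Both terms become uniformly small once $R$ is large.

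\textbf{Assembly and main obstacle.} Given $\epsilon>0$, the previous step provides an $R$ with $\|v_k\|_{C^{2,\gamma}(\{|x|>R\})}<\epsilon$ uniformly in $k$, and the limit $v=u-g$ inherits the same bound. Combined with the local $C^{2,\gamma}$ convergence on $[-R-1,R+1]$ and a routine treatment of the mixed pairs $x\in[-R,R]$, $|y|>R+1$ in the H\"older quotient (where $|x-y|>1$ reduces the quotient to an $L^\infty$ difference), this yields strong convergence of the subsequence in $C^{2,\gamma}(\rr)$. The main obstacle is precisely the equi-decay in the second step: there is no direct integrability input forcing uniform-in-$k$ tail smallness of $\partial_x u_k$ or $\partial_x^2 u_k$, so one must bootstrap the pointwise smallness of $v_k$---which is all that the uniform asymptotics actually supply---by invoking the next-higher regularity one derivative up. This is where the combination of the three hypotheses ($C^{2,\gamma}$, $\dot{H}^3$, and uniform asymptotics) is essential.
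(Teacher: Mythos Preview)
Your argument is correct, and it takes a genuinely different route from the paper's. The paper first reduces the problem to compactness in $C^{\gamma}(\rr)$ (the $C^{2,\gamma}$ statement then follows by interpolation), splits via a cutoff at radius $R$, and controls the tail $\|u_{k_1}-u_{k_2}\|_{C^\gamma(B_R^c)}$ by $\|u_{k_1}-u_{k_2}\|_{L^\infty(B_R^c)}+\|\partial_x(u_{k_1}-u_{k_2})\|_{H^1(B_R^c)}$, invoking the uniform asymptotics together with the $\dot H^1\cap\dot H^3$ bounds for the second term. By contrast, you work directly at the $C^{2,\gamma}$ level and obtain equi-decay of $\partial_x u_k$ and $\partial_x^2 u_k$ by a purely pointwise bootstrap from the uniform asymptotics of $u_k$ and the $C^{2,\gamma}$ bound, using $\dot H^3$ only to manufacture the extra $\tfrac12$-H\"older regularity on $\partial_x^2 u_k$ needed both to upgrade local $C^2$ convergence to local $C^{2,\gamma}$ and to run the near/far split on the H\"older seminorm tail. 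Your approach is somewhat longer but makes the tail control completely transparent (indeed, it never appeals to uniform smallness of $L^2$ tails, which the paper's phrasing leaves implicit); it also shows that the $\dot H^1$ hypothesis is not actually needed for this lemma. The paper's approach is more economical once one accepts the Sobolev-type tail estimate.
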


\begin{lem}\label{4.3}
Let $ u_k(x,t)$ be any uniformly bounded sequence satisfying \eqref{4.1} and with the uniform spatial asymptotics \eqref{LIDA}. Then $u_k(x,t)$ is strongly convergent in $L^\infty([0,T];C^{2,\gamma}(\rr))(0<\gamma<\frac{1}{2})$ up to a subsequence.
\end{lem}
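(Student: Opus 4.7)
The plan is to apply the Arzelà--Ascoli theorem in a Banach-valued setting, regarding $\{u_k\}_k$ as a family of continuous maps $[0,T]\to C^{2,\gamma}(\rr)$. Two ingredients are required: (i) for each fixed $t\in [0,T]$, the set $\{u_k(\cdot,t)\}_k$ is precompact in $C^{2,\gamma}(\rr)$; and (ii) the family is equicontinuous in $t$ with values in $C^{2,\gamma}(\rr)$, uniformly in $k$.

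Ingredient (i) follows directly from Lemma \ref{4.2}: at each fixed $t$, the bound \eqref{4.1} together with the uniform spatial asymptotics \eqref{LIDA} puts $\{u_k(\cdot,t)\}_k$ under the hypotheses of that lemma, so a $C^{2,\gamma}(\rr)$-convergent subsequence exists. I would then run a Cantor diagonal argument over a countable dense set $\{t_i\}\subset[0,T]$ to extract a single subsequence (still denoted $u_k$) converging in $C^{2,\gamma}(\rr)$ at each $t_i$.

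For ingredient (ii) I would combine two pieces of \eqref{4.1}. Integrating the bound on $\|\pa_t u_k\|_{C^\gamma}$ in time gives, uniformly in $k$,
\begin{equation*}
\|u_k(\cdot,t)-u_k(\cdot,s)\|_{L^\infty(\rr)} \;\leq\; \|u_k(\cdot,t)-u_k(\cdot,s)\|_{C^\gamma(\rr)} \;\leq\; C|t-s|.
\end{equation*}
On the other hand, the uniform $\dot{H}^3$-bound upgrades---via the Cauchy--Schwarz estimate $|v''(x)-v''(y)|\leq \|v'''\|_{L^2}|x-y|^{1/2}$ combined with the $C^{2,\gamma}$-control of lower-order pieces---to a uniform bound of $\|u_k(\cdot,t)\|_{C^{2,1/2}(\rr)}$. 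Applying the standard Hölder/Gagliardo--Nirenberg interpolation between $L^\infty(\rr)$ and $C^{2,1/2}(\rr)$ to the difference $v_k(t,s):=u_k(\cdot,t)-u_k(\cdot,s)$ then yields
\begin{equation*}
\|v_k(t,s)\|_{C^{2,\gamma}(\rr)} \;\leq\; C\,\|v_k(t,s)\|_{L^\infty}^{\theta}\,\|v_k(t,s)\|_{C^{2,1/2}}^{1-\theta} \;\leq\; C'\,|t-s|^{\theta},\qquad \theta=\tfrac{1/2-\gamma}{5/2}>0,
\end{equation*}
uniformly in $k$, which is the desired Hölder-in-$t$ equicontinuity.

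Finally, I would combine (i) and (ii) by a standard $\epsilon/3$ argument: pick $\delta>0$ so that $C'\delta^\theta<\epsilon/3$, cover $[0,T]$ by a finite collection of $t_i$'s with spacing less than $\delta$, and use pointwise convergence at the $t_i$'s together with the equicontinuity bound to conclude $\sup_t\|u_k(\cdot,t)-u_{k'}(\cdot,t)\|_{C^{2,\gamma}}<\epsilon$ for $k,k'$ large. The step that deserves most care is (ii), namely the passage from the $\dot{H}^3$-bound to a Hölder bound and the subsequent Gagliardo--Nirenberg interpolation on all of $\rr$; the rest---pointwise compactness via Lemma \ref{4.2}, the diagonal extraction, and the Arzelà--Ascoli conclusion---is entirely routine.
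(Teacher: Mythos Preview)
Your argument is correct and follows the same overall Arzel\`a--Ascoli/$\epsilon/3$ skeleton as the paper: pointwise precompactness from Lemma \ref{4.2}, equicontinuity in time, then a finite-net argument. The one notable difference is where the interpolation is placed. The paper first observes (``as in the proof of Lemma \ref{4.2}'') that it suffices to obtain compactness in $L^\infty([0,T];C^\gamma(\rr))$, since the passage from $C^\gamma$ to $C^{2,\gamma}$ follows by the same interpolation against the uniform $C^{2,\gamma}\cap\dot H^1\cap\dot H^3$ bound already used in Lemma \ref{4.2}. After this reduction, equicontinuity in $C^\gamma$ is \emph{immediate} from the hypothesis $\sup_t\|\pa_t u_k\|_{C^\gamma}\leq C(T)$---no further interpolation is needed. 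You instead work directly in $C^{2,\gamma}$ and therefore have to manufacture a $C^{2,1/2}$ bound from $\dot H^3$ and then run a Gagliardo--Nirenberg interpolation on the time difference; this is valid, but it duplicates work. A minor remark: once you have uniform equicontinuity, the Cantor diagonal over a countable dense set is unnecessary---a finite $\delta$-net of $[0,T]$ (as you in fact use in the final $\epsilon/3$ step, and as the paper does) suffices.
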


\begin{rem}
Theorem \ref{LW} is obviously a direct consequence of Lemma \ref{4.3} by taking $\epp\rightarrow 0$ in the approximate system \eqref{LAS}.
\end{rem}

\begin{proof} \textit{Lemma 5.1}\quad It suffices to prove that $ u_k(x)$ is compact in $C^{\gamma} (\rr)$, since the compactness in $C^{2,\gamma} (\rr)$ follows from a standard interpolation argument. Let $\chi_R(x)(R>0)$ be the standard smooth cut-off function such that
\begin{equation}\nonumber
\chi_R(x)=
\begin{cases}
1, \quad\quad x\in B_R, \\
0, \quad\quad x\in B_{2R}^c.
\end{cases}
\end{equation}
For any integers $k_1, k_2$, one has
\begin{equation} \label{4.4}
\begin{split}
\|u_{k_1}(x)-u_{k_2}(x)\|_{C^\gamma} \leq & \| |u_{k_1}(x)-u_{k_2}(x)|{\chi_{R}} + |u_{k_1}(x)-u_{k_2}(x)|(1-\chi_R) \|_{C^\gamma}  \\[5pt]
                                     \leq & \| u_{k_1}-u_{k_2} \|_{C^\gamma(B_{2R})} + \| u_{k_1}-u_{k_2} \|_{C^\gamma(B_R^c)}. 
\end{split}
\end{equation}
Lemma \ref{4.2} is proved if we show that the two terms in the last line of \eqref{4.4} can be arbitrarily small provided $R$, $k_1$ and $k_2$ are large enough. For the second term in \eqref{4.4}, we use the Sobolev imbedding theorem to derive that
\begin{equation}\label{4.5}
\begin{split}
\| u_{k_1}&-u_{k_2} \|_{C^\gamma(B_R^c)} \\[5pt]
                                        & \leq \| u_{k_1}-u_{k_2} \|_{L^\infty(B_R^c)}  +\sup_{0<|x-y|<1} \frac{|(u_{k_1}-u_{k_2})(x)-(u_{k_1}-u_{k_2})(y)|}{|x-y|^\gamma}   \\[5pt]
                                        & \leq \| u_{k_1}-u_{k_2} \|_{L^\infty(B_R^c)}  +  \sup_{0<|x-y|<1} \frac{|(u_{k_1}-u_{k_2})(x)-(u_{k_1}-u_{k_2})(y)|}{|x-y|}   \\[5pt]
                                         & \leq \| u_{k_1}-u_{k_2} \|_{L^\infty(B_R^c)}+ \|\pa_x(u_{k_1}-u_{k_2})\|_{L^\infty(B_R^c)} \\[5pt]
                                         & \leq \| u_{k_1}-u_{k_2} \|_{L^\infty(B_R^c)}+ C \|\pa_x(u_{k_1}-u_{k_2})\|_{H^1(B_R^c)}. 
\end{split}
\end{equation}
Noting that $ u_k(x)$ have the asymptotics \eqref{LIDA}, and that ~$ u_k(x) $ is uniformly bounded in $C^{2,\gamma}(\rr)\cap\dot{H}^1(\rr)\cap \dot{H}^3(\rr)$, we know that the two terms in the last line of \eqref{4.5} tends to 0 after taking $R$, $k_1$ and $k_2$ sufficiently large. By the compact imbedding from $C^{2,\gamma}(B_{2R})$ into $C^\gamma(B_{2R})$, the first term in \eqref{4.4} also converges to 0 up to a subsequence. 
\end{proof}

\begin{proof} \textit{Lemma 5.2} \quad As in the proof of Lemma \ref{4.2}, it suffices to prove the compactness in $L^\infty([0,T],C^\gamma(\rr))$. $\forall ~ \ep>0, ~\forall~t_1,t_2 \in [0,T]$, one has
\begin{equation}\nonumber
\begin{split}
\|u_k(x,t_1)-u_k(x,t_2)\|_{C^{\gamma}(\rr)}  \leq & \sup_{0\leq t \leq T}\|\pa_t u_k\|_{C^\gamma(\rr)}|t_1-t_2|  \\[5pt]
                                               \leq & M|t_1-t_2|,
\end{split}
\end{equation}
here $M=\sup_k \sup_{0\leq t \leq T} \|\pa_t u_k\|_{C^\gamma(\rr)}$. Since $[0,T]$ is a compact set, there exists a finite $\frac{\ep}{3M}$-net $\{ t_i| 0\leq t_i \leq T, 1\leq i \leq m \}$ s.t. for any
$0\leq t \leq T$, there exists a $t_j$ s.t. $|t-t_j|<\frac{\ep}{3M}$.
From Lemma \ref{4.2} we know that for any fixed $t_j$, there exists a $N_j$ s.t. for any $k_1,k_2>N_j$, there holds
\begin{equation}\nonumber
\|u_{k_1}(\cdot,t_j)-u_{k_2}(\cdot,t_j)\|_{C^\gamma(\rr)} < \frac{\ep}{3}.
\end{equation}
Taking $N=\max\{N_1,N_2,...,N_j\}$ for any $k_1,k_2>N$, one has
\begin{equation}\nonumber
\sup_{t\in[0,T]}\|u_{k_1}(\cdot,t)-u_{k_2}(\cdot,t)\|_{C^\gamma(\rr)} < \ep,
\end{equation}
which ends the proof of Lemma \ref{4.3}. 
\end{proof}

The following remark will be used in the proof of Lemma \ref{5.2}:
\begin{rem}
Notice that $\beta x$ is a solution to the system \eqref{OS} for any constant $\beta$. Using this fact and similar arguments as in the proof of Theorem \ref{LW}, we can show that system \eqref{OS} is also locally well-posed for initial data of the form
\begin{equation}
  f^\beta(x,0)=\beta x+ f_0(x),
\end{equation}
where $f_0(x)$ is given in Theorem \ref{LW}. Correspondingly, the local solution is given in the form $f^\beta(x,t)=\beta x + \widetilde{f}^\beta(x,t)$, where $\widetilde{f}^\beta(x,t)$ satisfies
\begin{equation}\label{beta}
\begin{cases}
&\widetilde{f}^\beta_t(x,t)=\frac{\rho^2-\rho^1}{2\pi} PV\int_{\rr}\frac{\pa_x\Delta_\alpha \widetilde{f}^\beta(x,t)}{1+(\Delta_\alpha \widetilde{f}^\beta(x,t)+\beta)^2}d\alpha ,\\[5pt]
&\widetilde{f}^\beta(x,0)=f_0(x), \quad \quad x\in \rr,
\end{cases}
\end{equation}
and
\begin{equation}\label{beta1}
\widetilde{f}^\beta(x,t)\in C([0,T],C^{2,\gamma}(\rr)\cap\dot{H}^1(\rr)\cap \dot{H}^3(\rr)).
\end{equation}
Moreover, the norm of $\widetilde{f}^\beta(x,t)$ in $C([0,T],C^{2,\gamma}(\rr)\cap\dot{H}^1(\rr)\cap \dot{H}^3(\rr))$ is independent of $\beta$ for all $|\beta|< 1$. Here for simplicity, we can still denote $\widetilde{f}^\beta(x,t)$ by $f(x,t)$.
\end{rem}

\section{Global Weak Solutions}

In this section, we prove the existence of global weak solutions stated in
Theorem \ref{MAINTHM}. We call $f(x,t)$ a weak solution if it satisfies the system
\eqref{OS} in the sense of distribution
\begin{equation}\label{WS}
\begin{split}
\int_0^T\int_\rr f(x,t)&\phi_t(x,t)dx dt + \int_\rr f_0(x)\phi(x,0)dx  \\[5pt]
&= \int_0^T \int_\rr PV \int_\rr \arctan (\frac{f(x,t)-f(x-\alpha,t)}{\alpha})d\alpha \phi_x(x,t)dxdt, 
\end{split}
\end{equation}
for all $\phi(x,t)\in C_c^\infty([0,T)\times \rr)$.

Let us introduce the following regularized system as in \cite{Peter}:
\begin{equation}\label{5.1}
\begin{cases}
&f^\ep_t(x,t)=-\ep C \Lambda^{1-\ep}f^\ep + \ep f^\ep_{xx} +  PV \pa_x\int_\rr \arctan(\Delta_\alpha^\ep f^\ep)d\alpha, \\[5pt]
&f^\ep(x,t)=f^\ep_0(x), 
\end{cases}
\end{equation}
where $C>0$ is an universal constant which will be determined later. The operator $\Lambda^{1-\ep}f$ is given by the formula
\begin{equation}\label{5.3}
\begin{split}
\Lambda^{1-\ep}f(x)= & c_1(\ep) \int_\rr \frac{f(x)-f(x-\alpha)}{|\alpha|^{2-\ep}}d\alpha, 
\end{split}
\end{equation}
with $ A_1 \leq c_1(\ep) \leq A_2$.\ $A_1,A_2>0$ are two constants (independent of $\ep$) provided that $\ep$ is suitably small. Initial data for the regularized system \eqref{5.1} is given by
the mollification of \eqref{IDB}
\begin{equation}
f^\ep_0(x)=\JJ f_0(x),
\end{equation}
where $\JJ$ is a standard mollifier \cite{Majda}. Note that the mollified initial data $f^\ep_0(x)$ is monotonically decreasing since $f_0(x)$ is. Moreover, we have
\begin{equation}\label{3.1}
\begin{split}
&f^\ep_0(x)\in C^{\infty}(\rr)\cap \dot{H}^k(\rr),~~for~~any~~k\geq 1,  \\[5pt]
&f^\ep_0(x)\rightarrow a~~as~ x\rightarrow -\infty, \ \  f^\ep_0(x)\rightarrow b~~as~ x\rightarrow +\infty~~\ \ (b>a). 
\end{split}
\end{equation}
Again the (artificial) viscous term $-\ep C \Lambda^{1-\ep}f(x,t)$ in \eqref{5.1} will
cause no trouble in deriving those \textit{a priori} estimates in Section 4, and from the previous discussion in Section 3 and 4 we know that for any fixed $\ep$ the regularized system admits a unique local regular solution. We remark that it is proved in \cite{Peter} that the regularized system with $H^k(\rr) (k\geq 3)$ initial data admits global regular solutions. Although our initial data has infinite energy, the spatial asymptotics
\eqref{IDA} enable us to utilize higher order energy estimates. One then can also deduce an existence result for global regular solutions. We leave details to the readers.

\begin{lem}\label{5.2}
Let ~$f^\ep(x,t)$~ be a regular solution to the system \eqref{5.1}. Then for any ~$t>0$, there holds
\begin{equation}\nonumber
\|f^\ep\|_{L^\infty(\rr)}(t) \leq \|f_0\|_{L^\infty(\rr)}, \quad\quad \|\pa_x f^\ep\|_{L^\infty(\rr)}(t) \leq \|\pa_x f_0\|_{L^\infty(\rr)}.
\end{equation}
\end{lem}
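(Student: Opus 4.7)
This is obtained exactly as in the Maximum Principle step of the proof of Lemma \ref{3.2}, applied now to \eqref{5.1}. Let $M(t) = \max_x f^\ep(x,t)$; either $M(t)$ is attained at a finite $x_t$ (so $\pa_x f^\ep(x_t)=0$ and $\pa_x^2 f^\ep(x_t)\le 0$), or only as $x\to -\infty$ with value $a$, the latter handled by case (ii) of Lemma \ref{3.2}. In the finite case, each of the three RHS terms in \eqref{5.1} at $x_t$ is non-positive: $\ep f^\ep_{xx}(x_t)\le 0$; $-\ep C \Lambda^{1-\ep} f^\ep(x_t)\le 0$ by \eqref{5.3} since $f^\ep(x_t)-f^\ep(x_t-\alpha)\ge 0$; and the nonlinear arctan term is $\le 0$ by the same computation as in Section 4. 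Thus $M'(t)\le 0$ almost everywhere, and the analogous argument at $\min_x f^\ep$ completes the bound (using also $\|\JJ f_0\|_{L^\infty}\le \|f_0\|_{L^\infty}$).

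\textbf{Part 2: $L^\infty$ bound for $\pa_x f^\ep$.} Set $g=\pa_x f^\ep$; differentiating \eqref{5.1} in $x$,
\[
g_t = -\ep C \Lambda^{1-\ep} g + \ep g_{xx} + \pa_x^2 \int_\rr \arctan(\Delta_\alpha^\ep f^\ep)\, d\alpha.
\]
Since $f_0$ is monotone decreasing, $f_0^\ep=\JJ f_0$ is too, so $g(\cdot,0)\le 0$. The argument has two stages. \emph{Stage (a):} show $g(x,t)\le 0$ for all $t$ by a continuity/barrier argument: if the maximum of $g$ were to first reach $0$ at some $T^*$ and finite point $x_*$ (which exists because $g\to 0$ at infinity), then $\pa_x g(x_*)=0$, $\pa_x^2 g(x_*)\le 0$, and $\Delta_\alpha^\ep f^\ep(x_*)\le 0$ since $f^\ep(\cdot,T^*)$ is still monotone decreasing; all three RHS terms of the $g$-equation at $(x_*,T^*)$ turn out non-positive, contradicting the crossing. \emph{Stage (b):} once $g\le 0$ is in hand, $\|g\|_{L^\infty}=-\min_x g$. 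At a minimizer $y_t$, $\pa_x g(y_t)=0$, $\pa_x^2 g(y_t)\ge 0$, $g(y_t)\le g(y_t-\alpha)$, and again $\Delta_\alpha^\ep f^\ep(y_t)\le 0$; the three RHS terms at $y_t$ are now non-negative, yielding $\frac{d}{dt}\min_x g\ge 0$ a.e., so $\|g(\cdot,t)\|_{L^\infty}\le \|g(\cdot,0)\|_{L^\infty}\le \|\pa_x f_0\|_{L^\infty}$ (the last step from the $L^\infty$-contractivity of $\JJ$).

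\textbf{Main obstacle.} The decisive step in both stages is the sign analysis of
\[
\pa_x^2\!\int \arctan(\Delta_\alpha^\ep f^\ep)\,d\alpha = \int \frac{\pa_x g(x)-\pa_x g(x-\alpha)}{(\alpha/|\alpha|^\ep)(1+(\Delta_\alpha^\ep f^\ep)^2)}\,d\alpha \;-\; 2\int \frac{(g(x)-g(x-\alpha))^2\,\Delta_\alpha^\ep f^\ep}{(\alpha/|\alpha|^\ep)^2(1+(\Delta_\alpha^\ep f^\ep)^2)^2}\,d\alpha
\]
evaluated at the relevant extremum of $g$. The second (quadratic) integrand acquires a definite sign precisely from the monotonicity $\Delta_\alpha^\ep f^\ep \le 0$, which is why Stage (a) must precede Stage (b). The first integral is treated by substituting $\pa_x g=0$ at the extremum and integrating by parts in $\alpha$; boundary terms vanish by the spatial decay of $g$, and the remaining expression can be signed using $g(x_*)-g(x_*-\alpha)\ge 0$ at the would-be maximum (Stage (a)) or $g(y_t)-g(y_t-\alpha)\le 0$ at the minimum (Stage (b)). Combining the two pieces gives the required sign of $g_t$ at the extremum and completes the proof.
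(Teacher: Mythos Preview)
Your outline for Part 1 is fine and matches the paper. The gap is in Part 2, in the sign analysis of the nonlinear term.

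First, your second (quadratic) integral: with $\Delta_\alpha^\ep f^\ep\le 0$ and $(\alpha/|\alpha|^\ep)^2>0$, the integrand is $\le 0$, so the term (with the $-2$ in front) is $\ge 0$. This is the \emph{wrong} sign for Stage~(a): at a would-be maximum of $g$ you need $g_t\le 0$, and this term pushes $g_t$ up. It is $O(1)$ in $\ep$, so you cannot absorb it with the $-\ep C\Lambda^{1-\ep}g$ term.

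Second, your treatment of the first integral does not close. After setting $\pa_x g(x_*)=0$ and integrating by parts in $\alpha$, the $\alpha$-derivative hits not only $|\alpha|^\ep/\alpha$ (which does give a favorable sign) but also $\frac{1}{1+(\Delta_\alpha^\ep f^\ep)^2}$; the latter produces a factor $\pa_\alpha \Delta_\alpha^\ep f^\ep$ whose sign is not controlled by $g(x_*)-g(x_*-\alpha)\ge 0$ or by monotonicity of $f^\ep$. So the claim that ``the remaining expression can be signed'' is unjustified.

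The paper's proof avoids both problems by a different decomposition: it writes the arctan term as $I_1+I_2$ built around the quantity $f_x(x)-\tfrac{f(x)-f(\alpha)}{x-\alpha}$ (which is $\ge 0$ at a maximum of $f_x$), and then differentiates. This yields pieces $I_{11},I_{12},I_{14},I_{22}$ that can be signed at the extremum---$I_{12}$ precisely via the monotonicity condition $f_x\cdot\Delta f\ge 0$---together with two residual pieces $I_{13},I_{21}$ that carry an explicit factor of $\ep$. These $O(\ep)$ leftovers are exactly what the artificial term $-\ep C\Lambda^{1-\ep}f_x$ is there to absorb, and this is why $C$ must be chosen large (so that $3-\tfrac{C}{2}c_2(\ep)<0$ and $1-\tfrac{C}{2}c_1(\ep)<0$). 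Your proposal treats $-\ep C\Lambda^{1-\ep}g$ as merely another harmless dissipative term and never invokes the size of $C$; but the whole point of the regularization \eqref{5.1} is that $C$ provides the margin needed to close the derivative maximum principle.
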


\begin{proof}
We drop the superscript $\ep$ and prove
first the maximum principle for $\pa_x f(x,t)$. Here we only show the part of maximum
value since the part of minimum value can be obtained by a similar argument. From the
spatial asymptotics \eqref{IDA} we know that $\pa_x f(x,t)\rightarrow 0, ~as~
x\rightarrow \pm \infty$. Since the initial data $f_0(x)$ is monotonically decreasing, we need to show that $f(x,t)$ maintains the monotonic property, i.e. the maximum value of $f_x(x,t)$ cannot exceed 0. If $f_x(x,t)$ attains its maximum value at infinity for all $0\leq t \leq T$, there is nothing to prove. Thus one suffices to examine those time $t_0$ such that
there exist a point $x_{t_0}(x_{t_0}\neq \pm\infty)$ such that $f_x(x,t)$ attains its maximum value at $(x_{t_0},t_0)$. Moreover, $x=x_{t_0}$ is a stationary point.
Denote
\begin{equation}\nonumber
M(t)=\max_{x}f_x(x,t).
\end{equation}
Suppose that $f_x(x,t)$ attains its maximum value at $x_t(x_t\neq \pm\infty
)$, then it is clear that
\begin{equation}\nonumber
f_{xx}(x_t,t)=0, \quad\quad f_{xxx}(x_t,t)\leq 0.
\end{equation}
Moreover, for almost such $t$ that $\pa_x f$ reaches its maximum value at finite spatial point $x_t$, one has
\begin{equation*}
M\prim(t)=f_{xt}(x_t,t).
\end{equation*}
The evolution of $M\prim(t)$ is govern by
\begin{equation}\nonumber
M\prim(t)=f_{xt}(x_t,t)=-\ep C \Lambda^{1-\ep}f_x(x_t)+\ep f_{xxx}(x_t) + I_x(x_t),
\end{equation}
where $I(x)$ is given by
\begin{equation}\nonumber
\begin{split}
I(x)= & \pa_x f(x) PV \int_\rr \frac{\frac{1}{\phi(x-\alpha)}}{1+(\Delta_{x-\alpha}^\ep f(x) )^2} d\alpha
      - (1-\ep) PV \int_\rr \frac{\frac{f(x)-f(\alpha)}{|x-\alpha|^{2-\ep}}}{1+(\Delta_{x-\alpha}^\ep f(x) )^2} d\alpha  \\[5pt]
    = & PV \int_\rr \frac{\frac{f_x(x)-\frac{f(x)-f(\alpha)}{x-\alpha}}{\phi(x-\alpha)}}{1+(\Delta_{x-\alpha}^\ep f(x) )^2} d\alpha
      + \ep PV \int_\rr \frac{\frac{f(x)-f(\alpha)}{|x-\alpha|^{2-\ep}}}{1+(\Delta_{x-\alpha}^\ep f(x) )^2} d\alpha  \\[5pt]
    \triangleq & I_1+ I_2. \qquad(~\phi(\alpha)\triangleq\frac{\alpha}{|\alpha|^\ep}~)
\end{split}
\end{equation}
Then one has
\begin{equation}\nonumber
I_x(x)=\pa_xI_1(x) + \pa_x I_2(x),
\end{equation}
where
\begin{equation}\nonumber
\begin{split}
\pa_x I_1(x)= & PV \int \frac{\frac{f_{xx}(x)}{\phi(x-\alpha)}}{1+(\Delta_{x-\alpha}^\ep f(x) )^2}d\alpha \\[5pt]
              &-2PV\int \frac{\frac{f_x(x)-\frac{f(x)-f(\alpha)}{x-\alpha}}{|x-\alpha|^{2-\ep}}}{(1+(\Delta_{x-\alpha}^\ep f(x) )^2)^2}
              (1+f_x(x)\frac{f(x)-f(\alpha)}{x-\alpha}|x-\alpha|^{2\ep}) d\alpha\\[5pt]
              &+ 3\ep PV \int  \frac{\frac{f_x(x)-\frac{f(x)-f(\alpha)}{x-\alpha}}{|x-\alpha|^{2-\ep}}}{1+(\Delta_{x-\alpha}^\ep f(x) )^2} d\alpha   - 2\ep PV \int\frac{\frac{f_x(x)-\frac{f(x)-f(\alpha)}{x-\alpha}}{|x-\alpha|^{2-\ep}}}{(1+(\Delta_{x-\alpha}^\ep f(x) )^2)^2} d\alpha   \\[5pt]
            \triangleq & I_{11}(x) + I_{12}(x) + I_{13}(x) + I_{14}(x)
\end{split}
\end{equation}
and
\begin{equation}\nonumber
\begin{split}
\pa_x I_2(x)= & \ep \pa_x PV \int \frac{\frac{f(x)-f(x-\alpha)}{|\alpha|^{2-\ep}}}{1+(\Delta_\alpha^\ep f(x))^2} d\alpha \\[5pt]
            = & \ep PV \int \frac{\frac{f_x(x)-f_x(x-\alpha)}{|\alpha|^{2-\ep}}}{1+(\Delta_\alpha^\ep f(x))^2} d\alpha -
            2\ep PV \int \frac{\frac{f_x(x)-f_x(x-\alpha)}{|\alpha|^{2-\ep}}(\Delta_\alpha^\ep f(x))^2}{(1+(\Delta_\alpha^\ep f(x))^2)^2} d\alpha   \\[5pt]
            \triangleq & I_{21}(x) + I_{22}(x).
\end{split}
\end{equation}
Obviously, one has $I_{11}(x_t)=0$. Moreover, one has
\begin{equation}\nonumber
f_x(x_t)-\frac{f(x_t)-f(\alpha)}{x_t-\alpha} \geq 0.
\end{equation}
Without loss of generality, we assume that $\pa_x f(x,t)\leq 0$ holds at least for a short time interval (otherwise one can refer to Remark \ref{SMon}), which is essential in the continuity induction argument.  Then we have
\begin{equation}\nonumber
f_x(x)\frac{f(x)-f(\alpha)}{x-\alpha} |x-\alpha|^{2\ep} \geq 0.\\[10pt]
\end{equation}
Therefore, one has
\begin{equation}\nonumber
I_{12}(x_t) \leq 0, \quad I_{14}(x_t)\leq 0, \quad I_{22}(x_t)\leq 0.
\end{equation}
For $I_{13}(x)$ and $I_{21}(x)$, we use the artificial term $\Lambda^{1-\ep}f_x(x,t)$ to make a balance. This term is given by
\begin{equation}\nonumber
\begin{split}
\Lambda^{1-\ep} f_x(t,x)= & c_1(\ep) \int \frac{f_x(x)-f_x(x-\alpha)}{|\alpha|^{2-\ep}}d\alpha   \\
                        = & c_2(\ep) \int \frac{f_x(x)-\frac{f(x)-f(x-\alpha)}{\alpha}}{|\alpha|^{2-\ep}} d\alpha.
\end{split}
\end{equation}
The two forms (see \cite{Peter}) in the above equalities will be used in different situations. For $I_{13}(x)$, one has
\begin{equation}\nonumber
\begin{split}
   -\frac{C}{2} & \ep \Lambda^{1-\ep}f_x(x,t)+I_{13}(x) \\
= & \ep PV \int \frac{\frac{f_x(x)-\frac{f(x)-f(\alpha)}{x-\alpha}}{|x-\alpha|^{2-\ep}}}{1+(\Delta_{x-\alpha}^\ep f(x))^2}
(3-\frac{C}{2}c_2(\ep))d\alpha \\
 &-\ep\frac{C}{2} c_2(\ep) PV \int \frac{\frac{f_x(x)-\frac{f(x)-f(\alpha)}{x-\alpha}}{|x-\alpha|^{2-\ep}}   (\Delta_{x-\alpha}^\ep f(x))^2 }{1+(\Delta_{x-\alpha}^\ep f(x))^2} d\alpha.
\end{split}
\end{equation}
Choosing $C$ large enough so that $3-\frac{C}{2}c_2(\ep)<0$, one gets
\begin{equation}\nonumber
-\frac{C}{2} \ep \Lambda^{1-\ep}f_x(x,t)+I_{13}(x) \leq 0.
\end{equation}
For $I_{21}(x)$, one has
\begin{equation}\nonumber
\begin{split}
  & -\frac{C}{2} \ep \Lambda^{1-\ep}f_x(x,t)+I_{21}(x) \\
= &  \ep PV \int \frac{\frac{f_x(x)- f_x(x-\alpha) }{|x-\alpha|^{2-\ep}}}{1+(\Delta_{x-\alpha}^\ep f(x))^2}
(1-\frac{C}{2}c_2(\ep))d\alpha  \\
  &-\ep\frac{C}{2} c_2(\ep) PV \int \frac{\frac{f_x(x)- f_x(x-\alpha) }{|x-\alpha|^{2-\ep}}
(\Delta_{x-\alpha}^\ep f(x))^2 }{1+(\Delta_{x-\alpha}^\ep f(x))^2} d\alpha.
\end{split}
\end{equation}
Choosing $C$ large to ensure that $1-\frac{C}{2}c_1(\ep)<0$, one gets
\begin{equation}\nonumber
-\frac{C}{2} \ep \Lambda^{1-\ep}f_x(x,t)+I_{21}(x) \leq 0.
\end{equation}
Therefore, one obtains
\begin{equation}\nonumber
\begin{split}
M\prim (t)= & f_{xt}(x_t,t)  \\
          = & -\ep C \Lambda^{1-\ep}f_x(x_t)+ \ep f_{xxx}(x_t)+ I_x(x_t) \leq 0,
\end{split}
\end{equation}
which means that
\begin{equation}\nonumber
f_x(x,t) \leq f_x(x_t,t)\leq f_x(x_0,0)\leq 0.
\end{equation}
By the standard continuity induction argument, one has
\begin{equation}\nonumber
f_x(x,t) \leq 0.
\end{equation}
By the same method we can deduce that ~$m(t)=\min_{x}f_x(x,t) \geq m(0)$. Thus we have
\begin{equation}\nonumber
\|\pa_x f\|_{L^\infty(\rr)}(t) \leq \|\pa_x f_0\|_{L^\infty(\rr)}.
\end{equation}
Since $f(x,t)$ is monotonically decreasing as the initial data, the $L^\infty$ maximum principle is more or less trivial. One easily finds that ~$b\leq f(x,t)\leq a $. Thus the proof of Lemma \ref{5.2} is completed. 
\end{proof}

\begin{rem}\label{SMon}
If $\pa_x f(x,~t)\leq 0$ does not hold for any short time interval $[0,t_0]$, then we consider $f^\beta(x,~t)=-\beta x+ \widetilde{f}^\beta(x,~t)$ with $0<\beta<1$. By Remark \ref{beta}, it is clear that $\widetilde{f}^\beta(x,t)$ is a solution to system \eqref{beta} with initial data $\widetilde{f}^\beta(x,~0)=f_0(x)$. Since $\pa_x f_0^\beta \leq -\beta$ holds for all $x \in \rr$, and $\widetilde{f}^\beta(x,~t)\in C([0,T],C^{2,\gamma}(\rr)\cap \dot{H}^3(\rr))$ uniformly in $\beta\in(0,1)$, by continuity argument, we have that $\pa_x f^\beta(x,~t)\leq -\frac{\beta}{2}<0$ holds for a time interval $I=[0,t_2]$ for all $\beta \in (0,1)$. On this interval, there are two possibilities:

(i) $|x_t|<\infty$ on a short time interval $[0,t_2]$, where $x_t$ is the argmax of the function $\pa_x f^\beta(x, t)$. In this case, repeating the arguments in the proof of Lemma \ref{5.2}, it is clear that $\pa_x f^\beta(x,t)\leq -\beta$ on $[0,t_2]$; 

(ii) $|x_t|<\infty$ on some time interval $(t_1,t_2]$, $0\leq t_1\leq t_2$, but $|x_t| \rightarrow \infty$ as $t \rightarrow t_1+$. In this case, we know that $\pa_x f^\beta(x,t)$ attains its maximum value at infinity on $[0,t_1]$. Therefore, $\pa_x f^\beta(x,t)\leq -\beta$ holds on $[0,t_1]$. On the time interval $(t_1,t_2]$, Repeating the above argument, we can derive that $M_\beta\prim(t)\leq 0$, which gives\\
\begin{equation}
  \pa_x f^\beta(t_2)\leq \pa_x f^\beta(t), \qquad  \forall ~ t_1< t< t_2.\\[5pt]
\end{equation}
take $t \rightarrow t_1+$, we obtain that $\pa_x f^\beta(x,t) \leq -\beta$ also holds on $[t_1,t_2]$.
Hence, we always have
\begin{equation}
  \pa_x f^\beta(x,t)= -\beta+ \pa_x \widetilde{f}^\beta(x,t) \leq -\beta,  \quad \text{on}~[0,t_2].\\[5pt]
\end{equation}
i.e. $\pa_x \widetilde{f}^\beta(x,t)\leq 0$ holds on a time interval $I$. Taking the limit $\beta\rightarrow 0$, we have $\pa_x f(x,t)\leq 0$ holds on I.
\end{rem}

Taking $\ep\rightarrow 0$, we write system \eqref{OS} in the sense of distribution
\begin{equation}\label{5.4}
\begin{split}
\int_0^T\int_\rr f^\ep(x,t)\phi_t(x,t)&dx dt + \int_\rr f^\ep_0(x) \phi(x,0) dx\\[5pt]
-\ep C \int_0^T \int_\rr  f^\ep &(x,t) \Lambda^{1-\ep}\phi(x,t)dx dt \\[5pt]
&+\ep \int_0^T \int_\rr f^\ep(x,t)\phi_{xx}(x,t)dxdt  \\[5pt]
&-\int_0^T \int_\rr PV \int_\rr \arctan(\Delta_\alpha^\ep f^\ep(x,t))\phi_x(x,t)d\alpha dxdt=0,\\[5pt]
\end{split}
\end{equation}
$\forall \phi(x,t)\in C_c^\infty([0,T)\times \rr).$ Global uniform bounds for $\|f\|_{L^\infty}$ and $\|\pa_x f\|_{L^\infty}$ enable us to take the limit for the linear terms due to the weak-star convergence. The convergence of the nonlinear term
\begin{equation}\nonumber
\begin{split}
&\int_0^T \int_\rr PV \int_\rr \arctan(\Delta_\alpha^\ep f^\ep(x,t))d\alpha \phi_x(x,t)dxdt   \\[5pt]
\longrightarrow & \int_0^T \int_\rr PV \int_\rr
 \arctan(\Delta_\alpha f(x,t))d\alpha \phi_x(x,t)dxdt,   \quad as ~~ \ep \rightarrow 0 
\end{split}
\end{equation}
comes from the \textit{a priori} estimates in Proposition \ref{5.2}, where we can obtain compactness in $L^\infty([0,T]\times B_R)$ and "cut off" the infinity part (details refer to \cite{Peter}).
Note that the initial data converge in the sense of weak topology:
\begin{equation}\nonumber
\int_\rr \JJ \ast f_0(x) \phi(x,0) dx=\int_\rr f_0(x) \JJ \ast \phi(x,0)dx \rightarrow  \int_\rr f_0(x) \phi(x,0)dx,
\end{equation}
$\forall \phi(x,0) \in L^1(\rr).$ Here we have used the approximation of identity in $L^1$, which ends the proof of Theorem \ref{MAINTHM}.

\section*{Acknowledgement}
The first two authors were in part supported by NSFC (grant No. 11421061 and 11222107), National Support Program for Young Top-Notch Talents, Shanghai Shu Guang project, Shanghai Talent Development Fund and SGST 09DZ2272900. The third author was in part supported by NSF grant DMS-1501000.


\frenchspacing
\bibliographystyle{plain}

\end{document}